\documentclass{amsart}
\usepackage{amsmath}
\usepackage{amssymb,tikz,xcolor}
\usepackage{showkeys}

\makeatletter
\@namedef{subjclassname@2020}{\textup{2020} Mathematics Subject Classification}
\makeatother \newtheorem{thm}{Theorem}[section] \newtheorem{lem}[thm]{Lemma} \newtheorem{prop}[thm]{Proposition}
 
 \theoremstyle{remark}
\newtheorem{rem}[thm]{Remark} 

 \title{Multidimensional examples of the Metropolis algorithm}
\author{Laurent Saloff-Coste}\address{Department of Mathematics, Cornell University\\ Ithaca, NY, USA}
\email{lps2@cornell.edu}

\thanks{The first author was partially supported by NSF grant DMS--2054593}  
\author{Sophie Uluatam}

\subjclass[2020]{60J10, 60J46}
\keywords{Spectral gap, Metropolis Algorithm, Finite Markov chains}
\begin{document}

\maketitle
\begin{abstract} Consider the problem of approximating a given probability distribution on the cube $[0,1]^n$ via the use of a square lattice discretization with mesh-size $1/N$ and the Metropolis algorithm. Here the dimension $n$ is fixed and we focus for the most part on the case $n=2$. In order to understand the speed of convergence of such a procedure, one needs to control the spectral gap, $\lambda$,  of the associated finite Markov chain, and how it depends on the parameter $N$.  In this work, we study basic examples for which good upper-bounds and lower-bounds on $\lambda$ can be obtained via appropriate application of path techniques. \end{abstract}

\section{Introduction} In the article \cite{What}  titled {\em What do we know about the Metropolis algorithm?}, Persi Diaconis and the first author reviewed some of the basic ideas behind the celebrated Metropolis algorithm and its quantitative analysis in the case of simple one-dimensional examples. In this sequel, we  complement this reference by treating some additional examples including basic multidimensional examples. Throughout, we require the dimension to be fixed and constants will depend (often in unspecified ways) of this fixed dimension.  In fact, we will mostly work in dimension $2$  even so the same techniques apply in higher dimensions with  only a few additional complications. For an interesting discussion of what happens when the dimension is allowed to tend to infinity, comparisons with other techniques, and pointers to the recent literature, see \cite{Kamatani}. We focus on obtaining good estimates for the spectral gap of these Markov chains using path arguments. The use of such path arguments in the context of finite reversible Markov chains is one of the topics emphasized in \cite{DiaStr}, and they play an important role in \cite{DSCcompR} and many other works. Our presentation emphasizes the idea that the finite Metropolis Markov Chains we study aim at  approximating a continuous probability distribution on a cube in $2$ or more dimensions. 

We refer the reader to \cite{What} for a discussion and earlier references regarding the Metropolis algorithm and its many variations and applications. For a different and less elementary discussion of key aspects of the use of the Metropolis algorithm, see \cite{DLME} and the references therein.

\subsection{Set-up}  \label{setup}To fix ideas, assume we want to approximate a probability measure $\mu(dx)=zf(x)dx$ for some constant $z>0$  in the  compact region $K=[0,1]^n$ in Euclidean space, that is, approximate $\mu(A)$ for reasonable subsets $A\subseteq K$, or compute $\mu(g)=\int_K g(x) \mu(dx)$ for reasonable functions $g$. In fact, one can think of the ``test problem'' of computing the normalizing constant 
$$z=\left(\int_K f(x)dx\right)^{-1}$$
appearing in the definition of the probability measure $\mu$. 

For simplicity, let us assume $K=[0,1]\times [0,1]$ and  $f>0$ is smooth (this assumption will be made more precise later) so that it makes sense to pick a large $N$ and work on the small squares of the finite grid $\{0,1/N,2/N,\dots, N/N\}^2$.  There are $N^2$ such small squares and we will parametrize them by the the integer grid points $B_N=\{1,\dots, N\}^2$ in such a way that the small square $(k_1,k_2)$ has upper-right corner at $(k_1/N,k_2/N)$. Equivalently, the center of that square is $((k_1-1/2)/N,(k_2-1/2)/N)$.
 
\begin{figure}[h]
\begin{tikzpicture}[scale=.2] 
\draw [help lines, thick] (0,0) grid (20,20);
\foreach \x in {.5,1.5,...,19.5}, \draw[red] (.5,\x) -- (19.5,\x);
 \foreach 
\x in {.5,1.5,...,19.5}, \draw[red] (\x,.5) -- (\x,19.5);

  \foreach \x in {.5,1.5,...,19.5} \foreach \y in {.5,1.5,...,19.5} 
 \draw[fill,red] (\x,\y) circle [radius=.1];

\end{tikzpicture}
\caption{Grid approximation at level $N$: The red grid has $N$ vertical lines and $N$ horizontal lines; the grey grid decomposes $[0,1]^2$ into $N^2$ little squares.}
\end{figure}
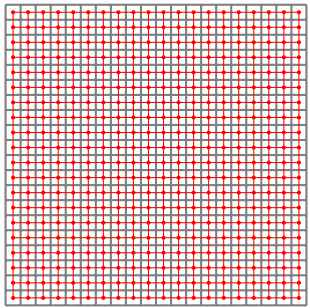

By blowing up this picture (together with the density $f$), we arrive to the finite Metropolis problem: Find a Markov chain which converges toward  the discrete probability measure $Z_NF_N$ on $B_N=\{1,\dots,N\}^2$ where $F_N$ is defined 
for $\mathbf k=(k_1,k_2)\in B_N$ by 
$$F_N(\mathbf k)= f\left(\left(\frac{k_1-\frac{1}{2}}{N},\frac{k_2-\frac{1}{2}}{N}\right)\right).$$ 
Let $Q_{\mathbf k}$ be the square of side length $1/N$ centered at $\left(\frac{k_1-\frac{1}{2}}{N},\frac{k_2-\frac{1}{2}}{N}\right)$. If $f$ is regular enough, we should be able to find $\epsilon_N>0$ such that
$$ \left|F_N(\mathbf k)- N^2\int_{Q_{\mathbf k}}fd\mu\right|<\epsilon_NF_N(\mathbf k)$$
and $$\left| \frac{1}{N^2}\sum_{\mathbf k\in B_N} F_N(\mathbf k) -\int_{[0,1]^2}fd\mu\right|<\frac{\epsilon_N }{N^2}\sum_{\mathbf k\in B_N} F_N(\mathbf k).$$
Of course, we have 
$$Z_N=\left(\sum_{\mathbf k\in B_N}F_N(\mathbf k)\right)^{-1},\;\;z=\left(\int_{[0,1]^2}fdx\right)^{-1}$$
and thus
$$\left|\frac{N^2Z_N}{z}-1\right|<\epsilon_N.$$

To find a Markov chain which converges toward  the discrete probability measure $Z_NF_N$ on $B_N=\{1,\dots,N\}^2$, we use the Metropolis algorithm with proposal based on simple random walk on the grid in continuous time.
Namely,  our proposal is 
$$Q_N(\mathbf k,\mathbf l)= \left\{\begin{array}{cl}1/4 &\mbox{  for all $\mathbf k,\mathbf l\in B_N$  with }
|k_1-l_1|+|k_2-l_2|=1,\\ 
1/4&\mbox{  if $\mathbf k=\mathbf l\in B_N$ and exactly one of $k_1,k_2$ is in $\{1,N\}$},\\
1/2 &\mbox{  if $\mathbf k=\mathbf l\in B_N$ and both $k_1,k_2$ are in $\{1,N\}$}.\end{array}
\right.$$
This proposal has the uniform distribution as its equilibrium measure. We set $|\mathbf k|=|k_1|+|k_2|.$

Throughout we use the notation $ Z\asymp T$ to signify that there are constants $0<c\le C<+\infty$ such that
$cT\le Z\le CT$. The constant $c,C$ do not depend of varying parameter (in particular do not depend on the size parameter $N$). They may depends on some additional fixed parameters such as the dimension, and on the parameters defining the functions $f,F$. 

\subsection{The Metropolis chain} Dropping the reference to $N$, assume we are given a positive function $F$ defined on $B=\{1,\dots,N\}^2$.  For our target, the discrete probability measure $\pi=ZF$, $Z^{-1}=\sum_BF$, the Metropolis Chain with kernel $M$ on $B^2$  is given as follows
$$M(\mathbf k,\mathbf l)= \left\{\begin{array}{l}\frac{1}{4}\min\{1,F(y)/F(x)\} \mbox{  for all $\mathbf k,\mathbf l\in B$  with }
|\mathbf k-\mathbf l|=1,\\ 
Q(\mathbf k,\mathbf k) +\sum_{{\mathbf m:|\mathbf k-\mathbf m|=1}\atop { F(\mathbf m)<F(\mathbf k)}}\frac{1}{4}\left(1-\frac{F(\mathbf m)}{F(\mathbf k)}\right)\mbox{  if $\mathbf k=\mathbf l\in B$}.\end{array}
\right.$$
This kernel is reversible with reversible measure $\pi=Z^{-1}F$ on $B$ in the sense that
$$\forall\,\mathbf k,\mathbf l\in B,\;\;\;M(\mathbf k,\mathbf l)\pi(\mathbf k)=M(\mathbf l,\mathbf k)\pi(\mathbf l).$$
Moreover, for $\mathbf k,\mathbf l\in B$ with $|\mathbf k-\mathbf l|=1$,
$$M(\mathbf k,\mathbf l)\pi(\mathbf k)=M(\mathbf l,\mathbf k)\pi(\mathbf l)=\frac{1}{4}\min\{\pi(\mathbf k),\pi(\mathbf l)\}.$$

It follows that the Markov operator
$$u\mapsto Mu,\;\;Mu(\mathbf k)=\sum_{\mathbf l\in B} M(\mathbf k,\mathbf l)u(\mathbf l)$$
is self-adjoint on $L^2(B,\pi)=L^2$ with spectrum composed of $N^2$ eigenvalues contained in $[-1,1]$.
 The largest eigenvalue of $M$ is  $\beta_0=1$, associated with constant eigenfunctions. Our focus will be to estimate the second largest eigenvalue, $\beta_1<1$ through the related quantity,
$$\lambda=1-\beta_1.$$
Following standard practice, we call $\lambda_1$ the spectral gap of the chain. It is the second lowest eigenvalue of $I-M$, which is minus the infinitesimal generator of the continuous semigroup of operators,
$$H_t: u\mapsto H_t u=e^{-t} \sum_{m=0}^\infty \frac{t^n}{n!}M^nu.$$
For each $\mathbf k\in B$, $H_t(\mathbf k,\cdot)$ is a probability distribution on $B$ and it is the distribution of the continuous time random Markov chains driven by the kernel $M$. It is well understood that
\begin{equation}\label{Ht}
\lim_{t\to +\infty} H_t(\mathbf k,\cdot)=\pi.
\end{equation}
Moreover, if we set $\pi_*=\min_B \{\pi\}$, we have (e.g., \cite[Cor. 2.1.5]{StF})
$$\max_{\mathbf k,\mathbf l}\left\{\left|\frac{H_t(\mathbf k,\mathbf l)}{\pi(\mathbf l)}-1\right|\right\}
\le \frac{1}{\pi_*} e^{-\lambda t}.$$
This basic inequality provides a simple quantitative control of the convergence of $H_t(\mathbf k,\cdot)$ to $\pi$
in terms of the spectral gap $\lambda$ and justify in large part our interest in estimating $\lambda$. A similar result
holds for the discrete time Markov chains but the details are complicated by the role played by the negative spectrum despite the fact that Metropolis chains typically have good aperiodicity properties.  In the sequel we focus on estimating $\lambda$ for a number of examples using the well established path technique (and variations on it) to obtain lower-bounds on $\lambda$.
In most cases, we establish also upper-bounds of the same order of magnitude. In the last section, we  briefly discuss what one can expect regarding the mixing time of the studied examples.

\section{The path technique for spectral gap lower-bounds}

\subsection{Dirichlet form notation} Let $B$ be a finite set and $K,\pi$ be be a reversible Markov kernel and its reversible probability measure on $B$.  These  assumptions mean that 
$$K: B\times B \to [0,1], \;(x,y)\mapsto K(x,y)$$
satisfies 
$$\sum_yK(x,y)=1 \mbox{ and } \pi(x)K(x,y)=\pi(y)K(y,x).$$ 
We can view $K$ as an operator acting on function defined on $B$ by the rule
$$Ku(x)=\sum_y K(x,y)u(y).$$
We work in the finite vector space of functions $u$ on $B$ equipped with the scalar product 
$$\langle u,v\rangle=\langle u,v\rangle_\pi=\sum_{b\in B} u(b)v(b)\pi(b).$$
Note that $$\pi(u^2)=\sum_{b\in B}|u(b)|^2 \pi(b)=\langle u,u\rangle.$$
On the this vector space, $K$ is diagonalizable with highest eigenvalue $1$ (associated with the constant functions) and second highest eigenvalue $\beta$ and spectral gap $\lambda=1-\beta$.
The Dirichlet form $\mathcal E=\mathcal E_{K,\pi}$ associated with the pair $(K,\pi)$ is the quadratic form
$$\mathcal E(u,v)= \sum_{x,y\in B} (u(x)-u(y))(v(x)-v(y)) \pi(x)K(x,y).$$
It allows us to compute the spectral gap $\lambda$ through the variational formula
$$\lambda=\inf \left\{\frac{\mathcal E(u,u)}{\pi(u^2)}: \pi(u)=0, \pi (u^2)\neq 0\right\}.$$
To obtain a lower-bound  $\lambda\ge \epsilon >0$, we need to prove that for all  functions $u$,
$$\mbox{Var}_\pi(u)=\pi(|u-\pi(u)|^2)$$
is bounded above by
$$\mbox{Var}_\pi(u)\le \frac{1}{\epsilon} \mathcal E(u,u).$$
To prove an  upper-bound  $\lambda\le \epsilon$, we need to find a function $u$ such that $\pi(u)=0$ and 
$$\frac{\mathcal E(u,u)}{\pi(u^2)}\le \epsilon.$$
The well-known formulas
$$\mbox{Var}_\pi(u)= \frac{1}{2}\sum_{x,y\in B}|u(x)-u(y)|^2\pi(x)\pi(y)=\min_{\xi\in \mathbb R}\sum_{b\in B}|u(b)-\xi|^2\pi(b)$$
are key for the path technique recalled  below.
\subsection{The path-technique} This work can be viewed as an illustration of the path techniques  we are about to describe. There are many variations that turn out to be useful when treating particular examples (we do not discuss here all possible variations).  It is useful to view $B$ as the vertex set of a graph whose edge set $E$
consists of those pairs $e=(x,y)$ in $B\times B$ such that $x\neq y$ (no loops) and $\pi(x)K(x,y)>0$ (possible connection through $K$). We will use the graph distance $d: B\times B\to [0,+\infty)$ which, for  $x,y\in B$ is defined as  the minimal number of edges in $E$ that one must cross to go from $x$ to $y$. For a subset $A\subseteq B$, $d(x,A)=\min\{d(x,y): y\in A\}$. With a slight abuse of notation, for any edge $e=(x,y)$ and $z\in B$, $d(z,e)=d(z,\{x,y\})$.
For all pairs $e=(x,y)$ of vertices (not just those in $E$), we set
$$Q(e)=\pi(x)K(x,y)$$ 
so that a pair $e$ is in $E$ if and only if $Q(e)>0$. Because of our standing assumptions, $Q((x,y))=Q((y,x))$ and thus $(x,y)\in E\Longleftrightarrow (y,x)\in E$. We let $\check{e}=(y,x)$ if $e=(x,y)$. With this notation, we have
\begin{eqnarray*}\mathcal E(u,v)&=& \frac{1}{2}\sum_{e\in B^2} (u(y)-u(x))(v(y)-v(x))Q(e)\\
&=&\frac{1}{2}\sum_{e\in E} (u(y)-u(x))(v(y)-v(x))Q(e).\end{eqnarray*}

By definition, a path is any finite string of vertices, $\gamma=(x_0,\dots, x_n)$, such that any two consecutive vertices in the path form a pair which belongs to $E$.

Let $A$ be a particular subset of $B$  and let $\pi_A=\pi(A)^{-1}\left.\pi\right|_A$, the normalized restriction of $\pi$ to $A$.  Observe that
$$\pi(|u-\pi_A(u|_A)|^2)= \pi(u^2)+\pi_A(u|_A)^2-2\pi(u)\pi_A(u|_A)$$
and
$$\sum_{x\in B,y\in A}|u(x)-u(y)|^2\pi(x)\pi_A(y)= \pi(u^2)+\pi_A(u|_A^2)-2\pi(u)\pi_A(u|_A).$$
By Jensen's inequality, $\pi_A(u|_A)^2\le \pi_A(u|_A^2)$, and thus
$$\pi(|u-\pi_A(u|_A)|^2)\le  \sum_{x\in B,y\in A}|u(x)-u(y)|^2\pi(x)\pi_A(y).$$ 
Note that, as noted earlier, when $A=B$, this can be improved to
$$\pi(|u-\pi(u)|^2)=\frac{1}{2}  \sum_{x,y\in B}|u(x)-u(y)|^2\pi(x)\pi(y).$$ 
Now, Let $\Gamma=\{\gamma_{xy}: x\in B,y\in A\}$ be a collection of paths $\gamma_{xy}$, indexed by the order pairs $xy$, $x,y\in B$, such that $\gamma_{xy}$ starts at $x$ and ends at $y$.   Let  $w: E\to (0,+\infty)$ be a positive edge function which we call a weight function.
For an edge $e=(a,b)$, write $du(e)=u(b)-u(a)$ for any function $u:B\to \mathbb R$.
Observe that (edges in a path are the pair of consecutive vertices along the path)
$$u(y)-u(x)= \sum_{e\in \gamma_{xy}} du(e)$$
and
$$|u(y)-u(x)|^2\le \left(\sum_{e\in \gamma_{xy}}\frac{1}{w(e)^2}\right)\left(\sum_{e\in \gamma_{xy}}|du(e) |^2w(e)^2\right).$$
Set $$|\gamma|=|\gamma|_w=\sum_{e\in \gamma}\frac{1}{w(e)^2}$$
and call this the $w$-length of $\gamma$.
Multiply by $\pi(x)\pi_A(y)$ and sum over  $B\times A$ to obtain
$$\sum_{x\in B,y\in A}|u(x)-u(y)|^2\pi(x)\pi_A(y)\le \sum _{x\in B,y\in A}\sum_{e\in \gamma_{xy}}|\gamma|_w\sum_{e\in \gamma_{xy}} |du(e)|^2 w(e)^2.$$
We rearrange this as follows.
\begin{eqnarray*}\lefteqn{\sum_{x\in B,y\in A}|u(x)-u(y)|^2\pi(x)\pi_A(y)}&&\\
&\le& \sum_{e\in E}\left(\frac{w(e)^2}{Q(e)} 
\sum _{x\in B,y\in A\atop {\gamma_{xy}\ni e}}|\gamma_{xy}|_w\sum_{e\in \gamma_{xy}}\pi(x)\pi_A(y)\right) |du(e)|^2Q(e).\end{eqnarray*}

This gives the following result.
\begin{prop} \label{prop-W}Referring to the notation introduce above and for any set $A\subset B$,
$$\lambda\ge 1/W  \;\;\; (\mbox{resp. } \lambda\ge 2/W \;\;\mbox{ if }\;  A=B),$$
where $$W=\max_{e\in E}\left\{\frac{w(e)^2}{Q(e)}\sum_{x\in B,y\in A \atop {\gamma_{xy}\ni e}} |\gamma_{xy}|_w\pi(x)\pi_A(y)\right\}.$$
\end{prop}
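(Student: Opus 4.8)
The plan is to use the variational characterization of $\lambda$ recorded above: it suffices to show that every $u\colon B\to\mathbb R$ obeys $\mathrm{Var}_\pi(u)\le W\,\mathcal E(u,u)$, which forces $\lambda\ge 1/W$, and that when $A=B$ one in fact gets $\mathrm{Var}_\pi(u)\le \tfrac12 W\,\mathcal E(u,u)$, which forces $\lambda\ge 2/W$. The chain of inequalities developed in the paragraphs just above is precisely this argument, so the proof amounts to reading that chain off and bounding its last coefficient by $W$.

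Concretely I would proceed in three steps. First, control the variance by a double sum over $B\times A$: from $\mathrm{Var}_\pi(u)=\min_\xi\pi(|u-\xi|^2)\le\pi(|u-\pi_A(u|_A)|^2)$, the algebraic identity for the right-hand side combined with Jensen's inequality $\pi_A(u|_A)^2\le\pi_A(u|_A^2)$ gives $\mathrm{Var}_\pi(u)\le\sum_{x\in B,\,y\in A}|u(x)-u(y)|^2\pi(x)\pi_A(y)$; in the case $A=B$ this inequality is upgraded to the exact identity $\mathrm{Var}_\pi(u)=\tfrac12\sum_{x,y\in B}|u(x)-u(y)|^2\pi(x)\pi(y)$, and it is exactly this spared factor $\tfrac12$ that yields the better constant there. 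Second, for each pair expand $u(y)-u(x)=\sum_{e\in\gamma_{xy}}du(e)$ along the chosen path and apply Cauchy--Schwarz with the weight $w$, getting $|u(y)-u(x)|^2\le|\gamma_{xy}|_w\sum_{e\in\gamma_{xy}}|du(e)|^2w(e)^2$. Third, substitute this into the double sum and interchange the two summations so as to sum over the edges $e\in E$ first; the coefficient of $|du(e)|^2Q(e)$ that appears is exactly $\frac{w(e)^2}{Q(e)}\sum_{x\in B,\,y\in A,\;\gamma_{xy}\ni e}|\gamma_{xy}|_w\,\pi(x)\pi_A(y)$, which is at most $W$ by the definition of $W$. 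Hence the double sum is $\le W\sum_{e\in E}|du(e)|^2Q(e)$; recognizing $\sum_{e\in E}|du(e)|^2Q(e)$ as (a fixed multiple of) $\mathcal E(u,u)$ and inserting this back into the first step completes the proof.

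I do not expect a genuine obstacle; the one thing that has to be handled with care is the bookkeeping of orientations. One must be consistent about whether the members of $E$, the edges traversed by a path, and the sums $\sum_{e\in E}$ count edges with or without orientation, since this choice is exactly what decides whether $\sum_{e\in E}|du(e)|^2Q(e)$ equals $\mathcal E(u,u)$ or $2\mathcal E(u,u)$, and hence whether the final constant comes out as $1/W$ or $2/W$ in the two regimes. The interchange of summations in the third step needs no justification beyond finiteness of $B$, $E$, and the paths. So the real work is simply to track the constants so that the statement --- including the improvement for $A=B$ --- emerges with the claimed factors.
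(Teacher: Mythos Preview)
Your proposal is correct and follows essentially the same route as the paper: the proposition is obtained by stringing together exactly the inequalities derived in the preceding paragraphs (variance bounded by the $B\times A$ double sum via Jensen, telescoping along $\gamma_{xy}$ with weighted Cauchy--Schwarz, then swapping the order of summation and bounding the edge coefficient by $W$), with the extra factor $1/2$ in the $A=B$ case coming from the exact variance identity. Your flag about orientation bookkeeping is apt and is the only place where care is needed to recover the stated constants.
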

In the sequel, we will often use the notation (the set $A$ will be clear from the context)
$$W(e)=\frac{w(e)^2}{Q(e)}\sum_{x\in B,y\in A \atop {\gamma_{xy}\ni e}} |\gamma_{xy}|_w\pi(x)\pi_A(y).$$
The basic ideas used for this proposition are well-known but its seems difficult to locate a proper reference using both weights and the subset $A$ as we did above.

\begin{rem}[The use of symmetry when $A=B$] Assume that $A=B$, the path $\gamma_{xy}$ is the same as $\gamma_{yx}$ in reverse, and $w(\check{e})=w(e)$ for all $e$. This is a very common situation. Let $\bar{e}=\{x,y\}$ (non-oriented edge) and write  $\bar{e}\in \gamma_{xy}$ if either $e$ or $\check{e}$ is on $\gamma_{xy}$.  
 It then follows that $$W(\check{e})=W(e) =\frac{1}{2} \frac{w(e)^2}{Q(e)}\sum_{x,y\in B \atop {\gamma_{xy} \ni \bar{e} }} |\gamma_{xy}|_w\pi(x)\pi(y).$$
 Now, this symmetric expression allow us to break the symmetry via additional conditions on $x$ and $y$. For instance,
we can bound $W(e)$ from above by 
 $$ \frac{w(e)^2}{Q(e)}\sum_{x,y\in B, \pi(x)\le \pi(y) \atop {\gamma_{xy} \ni e }} |\gamma_{xy}|_w\pi(x)\pi(y)
  +  \frac{w(e)^2}{Q(e)}\sum_{x,y\in B, \pi(x)\le \pi(y) \atop {\gamma_{xy} \ni \check{e} }} |\gamma_{xy}|_w\pi(x)\pi(y)
$$
 or by 
 $$ \frac{w(e)^2}{Q(e)}\sum_{x,y\in B, d(x,e)\le d(y,e)  \atop {\gamma_{xy} \ni e }} |\gamma_{xy}|_w\pi(x)\pi(y)
 +  \frac{w(e)^2}{Q(e)}\sum_{x,y\in B, d(x,e)\le d(y,e)  \atop {\gamma_{xy} \ni \check{e} }} |\gamma_{xy}|_w\pi(x)\pi(y) .$$ This can be very useful in estimating $W$.  For instance, in this context,
\begin{equation}\label{symred}
W\le 2\max\left\{ \frac{w(e)^2}{Q(e)}\sum_{x,y\in B, \pi(x)\le \pi(y) \atop {\gamma_{xy} \ni e }} |\gamma_{xy}|_w\pi(x)\pi(y) \right\}.\end{equation}
 We will make use of this remark in the last section of this paper.
 \end{rem}

\subsection{Two $1$-dimensional families of examples} \label{sec-dim1}
Although our main goals is to illustrate the path technique in $2$ or more dimension, it is worth starting with two related one dimensional examples which are not found in the literature.
See \cite{What,Nash} for other $1$-dimensional examples.  

\subsubsection{First family} Our first family of examples comes from the choice 
$$\pi(x)=\pi_{a_-,a_+}(x)= Z_{a_-,a_+}\times \left\{\begin{array}{cl} (1+|x|)^{a_-} &\mbox{ if } x\le 0,\\
(1+|x|)^{a_+} &\mbox{ if } x> 0,\end{array}\right.  x\in B=\{-N,\dots,N\},$$
where $a_-,a_+>0$. It serves as a warm-up for the second family which will be discussed in Section \ref{fam2}.
Without loss of generality, we assume that
$$a_-\le a_+.$$ 
The case $a_-=a_+$ is treated in \cite{Nash}.
The proposal chain is the chain corresponding to a simple random walk with loops at the two ends, and the Metropolis kernel
 is given by $$Q(e)=\pi(x)M(x,y)=\frac{1}{2}\min\{\pi(x),\pi(y)\}$$ if  $|y-x|=1$ and $0$ if $|y-x|>1$ (the value when $x=y$ can be deduced from the given formula because $\sum_y M(x,y)=1$). 
 Also,  $$Z^{-1}=Z_{a_-,a_+}^{-1}=1+\sum_1^N (1+k)^{a_-}+\sum_1^N (1+k)^{a_+}$$ so that  $Z_{a_-,a_+} \asymp N^{-(1+a_+)}$.
 
 \begin{prop}\label{prop-asymvalley} For each fixed pair $0\le a_-\le a_+<+\infty$, the Metropolis chain for $\pi_{a_-,a_+}$ on $B=\{-N,\dots,N\}$ has spectral gap $\lambda$ bounded above and below by
 $$\lambda \asymp \left\{\begin{array}{cl} 1/N^{2}&\mbox{ if } a_-\in (0,1),\\
1/N^2\log N &\mbox{ if } a_-=1,\\
1/N^{1+a_-} &\mbox{ if } a_->1. \end{array}\right.$$ 
 \end{prop}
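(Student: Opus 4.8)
The plan is to obtain matching upper and lower bounds via the path/Dirichlet-form machinery set up in Proposition \ref{prop-W}, exploiting the one-dimensional structure heavily. The key heuristic is that $\pi$ concentrates near the valley $x=0$, with the bulk of the mass coming from the side with the larger exponent $a_+$; in fact $Z\asymp N^{-(1+a_+)}$, so the ``far right'' points $x\asymp N$ carry mass $\asymp 1$ in aggregate, while a single far-left point $x\asymp -N$ has mass $\asymp N^{-(1+a_+)}\cdot N^{a_-} = N^{a_--a_+-1}$. Since $a_-\le a_+$, this is tiny unless $a_-=a_+$. The three regimes in the statement come entirely from the behavior of $a_-$ on the lighter side: when $a_-<1$ the left arm is ``short'' (its total mass times diameter is controlled), when $a_->1$ the left arm is ``long'' and the chain must climb a steep hill of height $N^{a_-}$ to escape, and $a_-=1$ is the logarithmic borderline. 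So the effective picture is essentially a one-sided problem on $\{0,\dots,N\}$ governed by the exponent $a_-$, which matches the known one-exponent case in \cite{Nash}.

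For the \emph{lower bound}, I would take $A=B$ and use geodesic (nearest-integer) paths $\gamma_{xy}$ along the line, with a weight function $w$ to be tuned. The quantity to control is $W(e)$ for an edge $e=(x-1,x)$; by the symmetry remark \eqref{symred} it suffices to sum over pairs with $\pi(x')\le\pi(y')$, i.e. to let the ``outer'' endpoint of the path be the one farther from the valley. Here $Q(e)=\tfrac12\min\{\pi(x-1),\pi(x)\}\asymp Z(1+|x|)^{a_\pm}$. For an edge on the right arm at position $x\asymp t$, the paths through $e$ join points $\{t,\dots,N\}$ to points $\{-N,\dots,t\}$; the dominant contribution is $\pi(\{t,\dots,N\})\cdot\pi(\{-N,\dots,t\})\asymp (t/N)^{?}\cdot 1$ times the path length. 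Choosing $w\equiv 1$ already gives the right answer on the right arm (length $\le 2N$, and the mass product is $\le 1$, $Q(e)^{-1}\asymp N^{1+a_+}/t^{a_+}$), contributing $O(N^2)$-type terms — but for the left arm, with $w\equiv1$ one gets $\sum_{t} t^{-a_-}$-type sums from $Q(e)^{-1}\asymp N^{1+a_+}/(Z_{\text{left}})$, and one must be careful. The cleaner route for the left arm is to use a weight $w(e)$ depending on the position $t$ (e.g. $w(e)^2\asymp t^{a_-}$ or $t^{a_-/2}$) so that $|\gamma|_w$ and $w(e)^2/Q(e)$ balance; this is the standard device that produces $N^2$ when $a_-<1$, $N^2\log N$ when $a_-=1$, and $N^{1+a_-}$ when $a_->1$, from the sum $\sum_{1}^{N} t^{a_--1}$ (resp. its logarithmic/linear variants). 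I would carry out the edge-by-edge bound separately for (i) edges on the right arm, (ii) edges on the left arm, (iii) the edge(s) at the valley, and take the max.

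For the \emph{upper bound}, I would exhibit a test function $u$ with $\pi(u)=0$ and $\mathcal E(u,u)/\pi(u^2)$ of the claimed order. The natural choice is a function that is essentially constant on the heavy right arm and ramps up across the light left arm — concretely something like $u(x)=\phi(x)$ where $\phi$ is supported (in its variation) on $x\le 0$, e.g. $\phi(x)=\sum_{k=x}^{0} k^{-a_-}$ for $x\le 0$ (suitably capped) and $\phi(x)=0$ for $x\ge 0$, then recentered to have mean zero. Then $\mathcal E(u,u)=\tfrac12\sum_{e}|du(e)|^2 Q(e)\asymp Z\sum_{t=1}^{N} (t^{-a_-})^2 t^{a_-}= Z\sum t^{-a_-}$, while $\pi(u^2)\asymp Z\sum_{t=1}^N (\sum_{k=t}^N k^{-a_-})^2 t^{a_-}$; the ratio reproduces the three regimes. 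The bookkeeping is slightly delicate at $a_-=1$ (harmonic sums) and one should double-check the recentering does not spoil $\pi(u^2)$ — but since almost all the mass sits on the right where $u$ is constant, $\pi(u^2)\asymp \pi(u)^2_{\text{right-vs-left}}$ contributions are dominated by the left tail, exactly as needed.

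\textbf{Main obstacle.} The genuinely delicate point is the \emph{left arm} analysis in the lower bound when $a_->1$: there the naive $w\equiv1$ path argument loses a factor, and one must choose the position-dependent weight correctly and verify that the edge near the \emph{bottom} of the hill (small $|x|$, where $Q(e)$ is largest but many long paths pass through) and the edge near the \emph{top} (large $|x|$, where $Q(e)\asymp Z N^{a_-}$ is smallest) both give $W(e)=O(N^{1+a_-})$ — i.e. checking that the weighted length $|\gamma|_w$ and the factor $w(e)^2/Q(e)$ are in balance along the whole arm. The matching upper bound's test function must then be tuned to be ``dual'' to this weight, and confirming the two constants are of the same order (especially the $\log N$ at $a_-=1$) is where the real care goes; the right-arm and valley contributions, by contrast, are routine and clearly subdominant since $a_+\ge a_-$ forces the right side to behave like the flat/heavy case.
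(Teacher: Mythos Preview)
Your overall strategy matches the paper's: Proposition~\ref{prop-W} with $A=B$ and a position-dependent weight for the lower bound, explicit test functions for the upper bound. Your one-sided test function (zero on the right, $\sum_{k\le |x|} k^{-a_-}$ on the left) is a legitimate simplification of the paper's two-sided choices and, after the Cauchy--Schwarz observation that $\pi(\phi)^2\le \pi(\phi^2)\,\pi(\{x\le 0\})$ with $\pi(\{x\le 0\})$ bounded away from $1$, yields the correct upper bounds in all three regimes.

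The gap is in the lower bound, specifically your assertion that ``$w\equiv 1$ already gives the right answer on the right arm.'' It does not. Consider the edge $e=(0,1)$: here $Q(e)\asymp Z\asymp N^{-(1+a_+)}$, the unweighted path length is $\asymp N$, and the dominant contribution to the mass product is $\pi(\{x\le 0\})\cdot\pi(\{y>0\})\asymp N^{a_--a_+}\cdot 1$, so
\[
W(e)\;\gtrsim\; N^{1+a_+}\cdot N\cdot N^{a_--a_+}\;=\;N^{2+a_-},
\]
which exceeds the target in every regime. The issue is that every path from the light left arm to the heavy right bulk funnels through this edge, and the crude length $N$ ignores that most of that length lies where $Q$ is large. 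The paper's remedy is to use the \emph{same} weight $w(e)=(1+|x|)^{a_-/2}$ on both arms --- note the exponent is $a_-$, not $a_+$, even on the right. This makes every $w$-length $\asymp D_{a_-}:=\sum_{0}^{N}(1+k)^{-a_-}$, while on the right $w(e)^2/Q(e)\asymp (1+b)^{a_--a_+}/Z$ is decreasing in $b$; the two effects balance to give $W(e)\lesssim D_{a_-}N^{1+a_-}$ uniformly. So the right arm near the valley is not ``routine and clearly subdominant'': it is exactly as critical as the left, and the nonobvious point is that the weight governed by the \emph{smaller} exponent $a_-$ is the correct choice on both sides.
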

\begin{proof} 
 In one dimension, there is no questions about the choice of path from $x$ to $y$. The question that remains is the choice of weight and this choice is important here. In general, choosing weight is more of an art than a science and proceeds by trial and error.  In this case, we make the choice (recall that $a_-\le a_+$)
 $$w(x,x+1)=w(x+1,x)=(1+|x|)^{a_-/2}$$
and set $$D_{a_-}= \sum_0^N(1+k)^{-a_-}\asymp \left\{\begin{array}{cl} N^{1-a_-} &\mbox{ if } a_-\in (0,1),\\
\log N &\mbox{ if } a_-=1,\\
1 &\mbox{ if } a_->1. \end{array}\right.$$
The quantity $D_{a_-}$ controls the $w$-length of paths.
 
  We use Proposition \ref{prop-W} with $A=B$.  For a fixed edge $e=(b,b+1)$, we need to compute
$$ W(e)=2 \frac{w(e)^2}{Q(e)}\sum_{x\le  b<y} |\gamma_{x,y}|_w\pi(x)\pi(y).$$
If $ b<0$,
$$W(e )\asymp  D_{a_-}  (1+N)^{a_-}(N-|b|)$$
and, if $0\le b<N$,
\begin{eqnarray*}W(e)&\asymp &  Z (1+b)^{a_--a_+}  D_{a_-} (N^{1+a_-}+ (1+b)^{1+a_+}) (1+N)^{a_+}(N-b).\\
&\asymp&  D_{a_-} \left(\frac{N^{1+a_-}}{(1+b)^{a_+-a_-}}+(1+b)^{1+a_-}\right)(1-b/N).
\end{eqnarray*}
 Taking maximum over  $-N\le b<N$ gives
 $$W\asymp D_{a_-}(1+N)^{1+a_-}\asymp \left\{\begin{array}{cl} N^2&\mbox{ if } a_-\in (0,1),\\
N^2\log N &\mbox{ if } a_-=1,\\
N^{1+a_-} &\mbox{ if } a_->1. \end{array}\right.$$ 
Proposition \ref{prop-W} gives $\lambda\ge 2/W$ and. In fact, one can show that $\lambda\asymp 1/W$ in each of  these cases. We simply indicate which test function to use to obtain, in each case, an appropriate lower-bound on $\lambda$. In the first two cases, the constant $c_-$ and $c_+$ below are chosen so that $\sum_1^Nf(-k)\pi(k)=
\sum_1^N f(k)\pi(k)$, so that $\pi(f)=0$ and the definition of $f$ ensures that $c_-\asymp c_+$ as functions of $N$.
\begin{itemize}
\item ($a_-\in (0,1)$) 
$$f(k)=\left\{\begin{array}{cl} c_-|k| & \mbox{ if } \; k\in \{-N,\dots, 0\},\\ -c_+|k|^{1+a_--a_+} &\mbox{ if }\; k\in \{1,\dots,N\}.\end{array}\right.$$ 
By inspection, $\mathcal E(f,f) \asymp c_-^2ZN^{1+a_-} $ and $\mbox{Var}_{\pi}(f)\asymp c_-^2 ZN^{3+a_-}$.
\item ($a_-=1$) 
$$f(k)=\left\{\begin{array}{cl} c_- \sum_1^{|k|} \frac{1}{m} & \mbox{ if } \;k\in \{-N,\dots,0\},\\
c_+k^{ 1-a_+}\log k &\mbox{ if }\; k\in \{1,\dots,N\}.\end{array}\right.$$
By inspection, $\mathcal E(f,f)\asymp c_-^2 Z \log N$ and $\mbox{Var}_{\pi}(f)\asymp c_-^2Z N^2(\log N )^2$.
\item ($a_->1$)  $\xi= Z\left(\sum_0^N (1+k)^{a_-}\right) \asymp N^{a_--a_+}$ and 
$$f(k)=\left\{\begin{array}{cl}  1- \xi & \mbox{ if } \;k\in \{-N,\dots,0\},\\
-\xi&\mbox{ if }\; k\in \{1,\dots,N\}.\end{array}\right.$$
By inspection, $\pi(f)=0$, $\mathcal E(f,f)\asymp  Z$ and $\mbox{Var}_{\pi}(f)\asymp ZN^{a_-+1}$.
\end{itemize}
\end{proof}

\subsubsection{Second family} \label{fam2}Our second family of examples extends the  first by allowing the two sides of the interval around $0$ to have different length, $N_-, N_+$. The two exponents $a_-,a_+\ge 0$ are not ordered.  Set
$$B=\{-N_-,\dots,0,\dots,N_+\}$$
with $$\pi(x)=Z\times\left\{\begin{array}{cl} (1+|x|)^{a_-}&\mbox{ if } x\in \{-N_-,\dots,0\},\\
(1+|x|)^{a_+} &\mbox{ if } x\in \{1,\dots,N_+\}.\end{array}\right.$$ 
 What is the order of magnitude of the spectral gap $\lambda$ for the associated Metropolis chain?
 
 First observe that $$Z^{-1}=1+\sum_1^{N_-} (1+k)^{a_-}+\sum_1^{N_+}(1+k)^{a_+}\asymp (1+N_-)^{a_-+1}+(1+N_+)^{a_++1}.$$
 
We start by deriving the following upper-bound.
\begin{lem} \label{lem-1}
For fixed reals  $a_-,a_+\ge 0$ there is a constant $C$ such that for
any integers $N_-,N_+\ge 0$, the spectral gap of the Metropolis chain for the measure $\pi$ above on $\{-N_-,\dots,N_+\}$ satisfies
$$\lambda\le C\min\left\{ \frac{1}{(N_-+N_+)^2}, \max\left\{\frac{1}{(1+N_-)^{1+a_-}},\frac{1}{(1+N_+)^{1+a_+}}\right\}\right\}.$$
\end{lem}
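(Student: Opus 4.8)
The plan is to prove the two competing bounds separately and then take their minimum. The bound $\lambda \le C/(N_-+N_+)^2$ is the ``diameter bound'': the graph here is a path of length $N_-+N_+$, and since $\pi$ is a polynomial-type weight bounded above and below on any bounded region, one expects the gap to be no larger than that of simple random walk on an interval of that length, up to constants. To make this precise I would use the variational characterization from Section~2 and exhibit a test function. A natural choice is a suitably normalized ``coordinate-like'' function, e.g. $f(x) = x - \pi(\mathrm{id})$ (the identity centered to have $\pi$-mean zero), or more robustly a function that is affine in $x$ on each of the two sides with slopes chosen to kill the $\pi$-mean, exactly as in the test functions appearing in the proof of Proposition~\ref{prop-asymvalley}. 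For such an $f$, $du(e)$ is bounded by a constant on each edge, so $\mathcal E(f,f) = \tfrac12\sum_{e\in E}|du(e)|^2 Q(e) \le C\sum_e Q(e) \le C$ (since $\sum_e Q(e)\le 1$ up to the factor coming from $M(x,x)$; in any case it is $O(1)$), while $\mathrm{Var}_\pi(f)$ is of order $(N_-+N_+)^2$ because $f$ varies linearly across a set of $\pi$-measure bounded below. Hence $\lambda \le \mathcal E(f,f)/\pi(f^2) \le C/(N_-+N_+)^2$.

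For the second bound, $\lambda \le C\max\{(1+N_-)^{-(1+a_-)},(1+N_+)^{-(1+a_+)}\}$, the idea is a ``bottleneck at the origin'' test function: take $f$ to be essentially constant equal to one value on the left half $\{-N_-,\dots,0\}$ and another constant on the right half $\{1,\dots,N_+\}$, with the two constants chosen so that $\pi(f)=0$ (this is exactly the $a_->1$ case in the proof of Proposition~\ref{prop-asymvalley}, where $f = \mathbf 1_{\{x\le 0\}} - \xi$ with $\xi = \pi(\{x\le 0\})$). Then $f$ is constant on each edge except the single edge $e_0=(0,1)$, so $\mathcal E(f,f) = |du(e_0)|^2 Q(e_0) = Q(e_0) \asymp Z\min\{\pi(0),\pi(1)\}/Z \asymp Z$ (the central weights are $O(1)$, so $Q(e_0)\asymp Z$). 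Meanwhile $\mathrm{Var}_\pi(f) = \xi(1-\xi)$, and using $Z^{-1}\asymp (1+N_-)^{1+a_-}+(1+N_+)^{1+a_+}$ together with $\pi(\{x\le 0\})\asymp Z(1+N_-)^{1+a_-}$ and $\pi(\{x\ge 1\})\asymp Z(1+N_+)^{1+a_+}$, one gets $\xi(1-\xi)\asymp Z^2(1+N_-)^{1+a_-}(1+N_+)^{1+a_+}$. Therefore
$$\lambda \le \frac{\mathcal E(f,f)}{\mathrm{Var}_\pi(f)} \asymp \frac{Z}{Z^2(1+N_-)^{1+a_-}(1+N_+)^{1+a_+}} = \frac{Z^{-1}}{(1+N_-)^{1+a_-}(1+N_+)^{1+a_+}}\asymp \frac{1}{\min\{(1+N_-)^{1+a_-},(1+N_+)^{1+a_+}\}},$$
where the last step uses $\frac{x+y}{xy} = \frac1x+\frac1y\asymp \frac{1}{\min\{x,y\}}$. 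Combining the two displays yields the claimed upper bound.

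The main obstacle is not conceptual but bookkeeping: one must be careful that the $O(1)$ factors coming from the loop probabilities $M(x,x)$ and the ``$+1$'' shifts in $(1+|x|)^{a_\pm}$ really are uniformly bounded above and below independent of $N_\pm$, so that all the $\asymp$ relations hold with constants depending only on $a_-,a_+$; this is where the hypothesis that $a_-,a_+$ are fixed is used. A secondary point requiring a little care is handling the degenerate cases where one of $N_-,N_+$ is $0$ (or very small), so that the ``interval'' essentially collapses on one side — there the first bound already gives everything and the second is automatically consistent, but one should note the argument still applies since the test functions degenerate gracefully. Finally, one should double-check the direction of the inequality in the Cauchy–Schwarz-free estimate $\mathcal E(f,f)\le C$: since $\sum_{e\in E}Q(e)\le \sum_{x\neq y}\pi(x)K(x,y) \le \sum_x\pi(x) = 1$, the bound $\mathcal E(f,f)=\tfrac12\sum_e |df(e)|^2 Q(e)\le \tfrac12\|df\|_\infty^2$ is immediate, so no subtlety arises there.
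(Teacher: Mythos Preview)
Your bottleneck argument for the bound $\lambda \le C\max\{(1+N_-)^{-(1+a_-)},(1+N_+)^{-(1+a_+)}\}$ is correct and in fact slightly cleaner than the paper's: you compute $\mathrm{Var}_\pi(f)=\xi(1-\xi)$ symmetrically and use $(x+y)/(xy)\asymp 1/\min\{x,y\}$, whereas the paper first assumes WLOG $(1+N_-)^{1+a_-}\le (1+N_+)^{1+a_+}$ and then estimates $\pi(f^2)$ directly. Both arrive at the same place.

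The diameter bound, however, has a real gap. Your claim that for $f(x)=x$ one has $\mathrm{Var}_\pi(f)\asymp (N_-+N_+)^2$ ``because $f$ varies linearly across a set of $\pi$-measure bounded below'' fails precisely when the \emph{shorter} side carries most of the mass, i.e.\ when $N_-\le N_+$ but $(1+N_-)^{1+a_-}>(1+N_+)^{1+a_+}$. Take for instance $a_-=3$, $a_+=0$, $N_-=N$, $N_+=N^2$. Then $Z^{-1}\asymp N^4$, $\pi$ is concentrated on $\{-N,\dots,0\}$, and one computes $\pi(f)\asymp -N$ and $\mathrm{Var}_\pi(f)\asymp N^2=N_-^2$, \emph{not} $N_+^2=N^4$. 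So the identity only yields $\lambda\le C/N_-^2$, which is far weaker than the required $\lambda\le C/N_+^2$. Your fallback (``affine on each side with slopes chosen to kill the $\pi$-mean'') does not rescue this as written, because the very next sentence assumes $|du(e)|$ is bounded by a constant, which is incompatible with the unequal slopes you would actually need.

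The paper confronts exactly this case: after reducing to $N_-\le N_+$, it splits into the two regimes $N_-^{1+a_-}\lessgtr N_+^{1+a_+}$. In the first regime the identity works (via a pigeonhole on $[N_+/2,N_+]$, which carries mass $\asymp 1$). In the second regime it uses $f(k)=-N_+^{1+a_+}k$ on the left and $f(k)=cN_-^{1+a_-}k$ on the right, with $c\asymp 1$ chosen so that $\pi(f)=0$; the slopes are large and one must compute $\mathcal E(f,f)$ and $\mathrm{Var}_\pi(f)$ honestly (they are both large, but their ratio is $\asymp N_+^{-2}$). The missing idea in your sketch is this regime-dependent choice of slopes and the accompanying computation.
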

\begin{proof} We prove separately that $\lambda$ is bounded above (up to a multiplicative constant)  by each of the quantities  
$\max\left\{\frac{1}{(1+N_-)^{1+a_-}},\frac{1}{(1+N_+)^{1+a_+}}\right\}$ and $\frac{1}{(N_-+N_+)^2}$.

For the first part of the proof, without loss of generality, we assume that 
$$(1+N_-)^{1+a_-}\le (1+N_+)^{1+a_+}.$$ We then want to show that $\lambda\le C(1+ N_-)^{ -(1+a_-)}$.
We use the test function $f$ equals to  $1-\xi$ on $\{-N_-,\dots,0\}$, and  $-\xi $ on $\{1,\dots,N_+\}$. Here 
$$\xi= \pi(\{-N_-,\dots,0\})=\frac{\sum_0^{N_-}(1+k)^{a_-}}{ \sum_0^{N_-}(1+k)^{a_-}+\sum_1^{N_+} (1+k)^{a_+}}
\asymp \frac{(1+N_-)^{1+a_-}}{(1+N_+)^{1+a_+}}$$ so that $\pi(f)=0$.  We simply need to estimate $\mathcal E(f,f)$ from above and $\mbox{Var}_\pi(f)=\pi(f^2)$ from below.  Only the edges $\{(0,1),(1,0)\}$
contribute to $\mathcal E(f,f)\asymp Z$ and 
$$\pi(f^2)\asymp Z \left((1-\xi)^2 (1+N_-)^{1+a_-}+\xi^2(1+ N_+)^{1+a_+}\right)\asymp  Z(1+N_-)^{1+a_-}.$$
This proves that  $\lambda \le C(1+N_-)^{-(1+a_-)}$. Removing the extra assumption that $(1+N_-)^{1+a_-}\le (1+N_+)^{a_+}$,   we obtain
$$\lambda\le C\max\left\{\frac{1}{(1+N_-)^{1+a_-}},\frac{1}{(1+N_+)^{1+a_+}}\right\}.$$

Our next goal is to show that $\lambda\le C(N_-+N_+)^{-2}$. For this, we can assume without loss of generality that
$N_-\le N_+$ so that the desired result reads $\lambda\le C' N^{-2}_+$.  We consider two cases, depending on which side gives the main contribution to $Z^{-1}$.  

We start with the case when $N_-^{1+a_-}\le N_+^{1+a_+}$
and use the test function $f(k)=k$ whose Dirichlet form satisfies  $\mathcal E(f,f)\asymp 1$. The mean of $f$, $\pi(f)$, is a number between 
$-N_-$ and $N_+$ and, whatever it is, there is always an interval  of size $N_+/8$ in $\{[N_+/2],\dots, N_+\}$
over which $|f-\pi(f)|\ge N_+/8$.  This implies that $\mbox{Var}_\pi(f)\ge cN_+^2$ and thus $\lambda\le C'N_+^{-2}$ as desired. This does not work if  $N_-^{1+a_-}$ is the dominant term in $Z^{-1}\asymp N_-^{1+a_-}+N_+^{1+a_+}$.

When $N_-^{1+a_-}>N_+^{1+a_+}$, consider the test function 
$$f(k)=\left\{\begin{array}{cl} -N_+^{1+a_+}k&\mbox{ if } k\in \{-N_-,\dots,0\},\\
cN_-^{1+a_-}k &\mbox{ if } k\in \{1,2,\dots,N_+\}.\end{array}\right.$$ 
The constant $c\asymp 1$ is chosen to that $\pi(f)=0$.
The Dirichlet form of $f$ satisfies  
$$\mathcal E(f,f)\asymp Z(N_+^{2(1+a_+)}N_-^{1+a_-}+N_-^{2(1+a_-)}N_+^{1+a_+})\le 2ZN_-^{2(1+a_-)}N_+^{1+a_+}$$
because we assume  that  $N_-^{1+a_-}>N_+^{1+a_+}$. The variance of $f$ satisfies
$$\mbox{Var}_\pi(f)\asymp Z(N_+^{2(1+a_+)}N_-^{3+a_-}+N_-^{2(1+a_-)}N_+^{3+a_+})\ge ZN_-^{2(1+a_-)}N_+^{3+a_+}.$$  
This gives us $\lambda\le C'N_+^{-2}$ as desired.
\end{proof}

Next, we derive lower-bounds on the spectral gap.
\begin{lem}\label{lem-2} ~

\begin{itemize}
\item Assume that
$a_-,a_+\in (1,+\infty) $. Then 
$$\lambda\asymp \min\left\{\frac{1}{(N_-+N_+)^2},  \max\left\{ \frac{1}{(1+N_-)^{1+a_-}},\frac{1}{(1+N_+)^{1+a_+}}\right\}\right\}.$$
\item Assume that $\min\{a_-,a_+\}\in (0,1)$. Then
$$\lambda\asymp \frac{1}{(N_-+N_+)^2}.$$
\item Assume that $a_+=a_-=1$ and $N_-\le N_+$. Then 
$$\lambda\asymp   \frac{ 1}{N_+^2+N_-^2\log N_- .}$$
In particular $\lambda\asymp N_+^{-2}$ whenever $N_-\le N_+/\sqrt{\log N_+}$.
\item Assume that  $\min\{a_-,a_+\}=1$ and $\max\{a_-,a_+\}>1$. For convenience,
 assume $a_-=1<a_+$.  Then
 $$\lambda\asymp \max\left\{ \frac{1}{N_+^2+N_-^2\log N_-},\frac{1}{N_-^2 +N_+^{1+a_+}\log N_+}\right\}.$$
 \end{itemize} \end{lem}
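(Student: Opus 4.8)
The plan is to prove Lemma~\ref{lem-2} by combining the upper bounds already established in Lemma~\ref{lem-1} with matching lower bounds obtained from Proposition~\ref{prop-W} (applied with $A=B$ and the symmetry reduction \eqref{symred}), choosing the weight function and, when needed, auxiliary test functions case by case. Throughout I would use the estimate $Z^{-1}\asymp (1+N_-)^{1+a_-}+(1+N_+)^{1+a_+}$ and mimic the computation of $W(e)$ carried out in the proof of Proposition~\ref{prop-asymvalley}, now keeping track of the two side-lengths. The general principle is: for an edge $e=(b,b+1)$ with, say, $0\le b<N_+$, one has $W(e)\asymp \frac{w(e)^2}{Q(e)}\sum_{x\le b<y}|\gamma_{xy}|_w\pi(x)\pi(y)$, and the inner double sum factors into a sum over $x$ (the whole left part plus $\{1,\dots,b\}$) times a sum over $y\in\{b+1,\dots,N_+\}$, with $|\gamma_{xy}|_w$ controlled by a quantity like $D(b)=\sum_{j}w(j,j+1)^{-2}$ along the relevant stretch. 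Symmetric statements hold for $b<0$.

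For the first bullet ($a_-,a_+>1$) I would take the analogue of the weight used before, namely $w(x,x+1)=(1+|x|)^{\alpha/2}$ where $\alpha=\min\{a_-,a_+\}>1$ on each side (or more precisely the side-appropriate exponent capped so that $w$-lengths stay bounded); since $a_\pm>1$, every $w$-length is $\asymp 1$, and the computation of $W(e)$ reduces to balancing $\pi(\text{heavy side})$ against $\pi(\text{light tail})$. One finds $W\asymp \max\{(N_-+N_+)^2,\ (1+N_-)^{1+a_-}\wedge\text{stuff}\}$ — more carefully, $W(e)$ is largest either at the bottom of the valley (giving the $(N_-+N_+)^2$ term) or near the lighter endpoint (giving $\max\{(1+N_-)^{-(1+a_-)},(1+N_+)^{-(1+a_+)}\}^{-1}$), matching Lemma~\ref{lem-1}. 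For the second bullet ($\min\{a_-,a_+\}<1$) the dominant $w$-length blows up like $N^{1-\min\{a_-,a_+\}}$ exactly as $D_{a_-}$ did, and one checks that the resulting $W$ is $\asymp (N_-+N_+)^2$; the matching upper bound is the $(N_-+N_+)^{-2}$ half of Lemma~\ref{lem-1}, so here no test function is needed beyond what is already there.

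The last two bullets, involving a logarithmic factor, are where the real work lies, and I expect the main obstacle to be pinning down the $N_-^2\log N_-$ versus $N_+^2$ competition sharply. For $a_-=a_+=1$ with $N_-\le N_+$, I would use the weight $w(x,x+1)=(1+|x|)^{1/2}$ on both sides so that the $w$-length of a path reaching distance $m$ from $0$ is $\asymp \log(1+m)$; then for an edge near position $b$ on the long ($+$) side one gets $W(e)\asymp Z\cdot(\log N_+)\cdot(N_-^2+N_+^2)\cdot N_+\cdot\ldots$, which after inserting $Z\asymp (N_-^2+N_+^2)^{-1}$ collapses to something of order $N_+^2$; for an edge on the short side the relevant factor is $N_-^2\log N_-$. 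The point is that $W\asymp N_+^2+N_-^2\log N_-$, and the lower bound $\lambda\ge 2/W$ together with a test function ($f(k)=k$ works when $N_+^2$ dominates; when $N_-^2\log N_-$ is comparable one uses $f(k)=\sum_1^{|k|}1/m$ on the short side suitably scaled and linear on the long side, as in the $a_-=1$ case of Proposition~\ref{prop-asymvalley}) gives the matching upper bound $\lambda\le C/(N_+^2+N_-^2\log N_-)$. For the fourth bullet ($a_-=1<a_+$), the two terms inside the outer $\max$ correspond to the two natural ways the chain can be slow: moving mass out of the shallow $a_-=1$ side (cost $\asymp N_+^2+N_-^2\log N_-$, since that side has length $N_-$ and the heavy $a_+$ side of length $N_+$ dominates $Z^{-1}$ when $N_+^{1+a_+}>N_-^2$) versus moving mass out of the steep $a_+$ side (cost $\asymp N_-^2+N_+^{1+a_+}\log N_+$ by the bounded-$w$-length analysis with a $\log N_+$ coming from the boundary layer near the top). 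I would establish the lower bound on $\lambda$ by applying Proposition~\ref{prop-W} once with the weight tuned to each scenario and taking the better of the two bounds, and the upper bound by exhibiting, for whichever term is larger, the corresponding test function (either $f$ concentrated against the shallow side, or the indicator-type function $f=\mathbf 1_{\{-N_-,\dots,0\}}-\xi$ as in Lemma~\ref{lem-1} refined with a logarithmic correction near the $+$ endpoint). The delicate bookkeeping is making sure the constants in $\asymp$ genuinely do not depend on $N_\pm$ and that the "$\max$" (rather than "$\min$") is the correct combination — this requires checking that the two bounds cross over at the right ratio of $N_-$ to $N_+$.
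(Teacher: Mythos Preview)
Your overall strategy matches the paper's: lower bounds via Proposition~\ref{prop-W} with carefully chosen weights, upper bounds from Lemma~\ref{lem-1} supplemented by explicit test functions where a logarithm is needed. Two of your concrete choices, however, do not deliver the sharp bounds.

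\textbf{First bullet ($a_\pm>1$), weight choice.} The weight $w(x,x+1)=(1+|x|)^{\alpha/2}$ with $\alpha=\min\{a_-,a_+\}$ (or the ``side-appropriate exponent'') keeps $w$-lengths bounded, but it makes $w(e)^2/Q(e)$ too large on the long side. Concretely, suppose $a_-\le a_+$, $N_-^{1+a_-}\le N_+^{1+a_+}$, and take an edge at $b\asymp N_+/2$: then $w(e)^2/Q(e)\asymp Z^{-1}(1+b)^{a_--a_+}$ and both tail masses are of order~$1$, giving $W(e)\asymp N_+^{1+a_-}$, which exceeds the target $\max\{N_+^2,\,N_-^{1+a_-}\}$ whenever $N_-\ll N_+$. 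The paper's remedy is a genuinely different, piecewise weight: $(1+|k|)^{\eta/2}$ with $1<\eta<\min\{a_-,a_+\}$ on $|k|\le N_-$, but a \emph{constant} $w(e)=N_+^{1/2}$ on the long tail $N_-<k\le N_+$. That switch to a flat weight is what brings $w(e)^2/Q(e)$ down to $N_+/(Zk^{a_+})$ on the tail and recovers $W(e)\le C\max\{N_+^2,N_-^{1+a_-}\}$. Your parenthetical ``capped so that $w$-lengths stay bounded'' does not capture this; the $w$-lengths are already bounded with the power weight, and the obstruction is elsewhere.

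\textbf{Third bullet ($a_-=a_+=1$), test function.} When $N_-^2\log N_-$ dominates $N_+^2$ you propose $f=\sum_1^{|k|}1/m$ on the short side and $f$ linear on the long side. This misses the logarithm: for $N_-=N_+=N$ one computes $\mathcal E(f,f)\asymp Z(\log N)^2$ and $\mathrm{Var}_\pi(f)\asymp ZN^2(\log N)^2$, yielding only $\lambda\le C/N^2$, not $C/(N^2\log N)$. The paper instead takes $f(k)=\pm c_\pm\sum_1^{|k|}1/m$ on \emph{both} sides (with asymmetric scalings $c_-\asymp N_+^2\log N_+$, $c_+\asymp N_-^2\log N_-$), which produces the extra factor of $\log N_-$ in the variance and gives the sharp bound. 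The same correction is needed in the fourth bullet: the indicator-type function from Lemma~\ref{lem-1} will not produce the $\log$ factors, and the paper again uses the two-sided harmonic sum test function (now with scalings $N_+^{1+a_+}$ and $N_-^2$) together with a case split on whether $N_+^{1+a_+}\gtrless N_-^2$.
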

 \begin{rem} The last two cases can be stated together as follows: if  $\min\{a_-,a_+\}=1$ then
 $$\lambda\asymp   \max\left\{ \frac{1}{N_+^2+N_-^{1+a_-}\log N_-},\frac{1}{N_-^2 +N_+^{1+a_+}\log N_+}\right\}.$$ 
 \end{rem}
\begin{proof} Because of Lemma \ref{lem-1}, in the first two cases, it suffices to prove the lower-bounds. 
In the first case when $a_-,a_+\in (1,+\infty)$, without loss of generality, we can assume  that $N_-^{1+a_-}\le N_+^{1+a_+}$ so that we want to prove that $$\lambda\ge c\min\{(1+ N_+)^{-2},(1+N_-)^{-(1+a_-)}\}.$$  Note that $Z\asymp N_+^{-(1+a_+)}$ and fix $1<\eta<\min\{a_-,a_+\}$.
For  an edge $e$ whose vertex farthest from $0$ is $k$, set
$w(e)=(1+|k|)^{\eta/2}$ when $|k|\le N_-$  and $w(e)= N_+^{1/2}$ if $N_-<k\le N_+$ (if such $k$ exist). This gives $|\gamma_{xy}|_w\asymp 1$ for all $x,y\in \{-N_-,\dots,N_+\}$. We consider
$$ W(e)= \frac{w(e)^2}{Q(e)}\sum_{x\le  b<y} |\gamma_{x,y}|_w\pi(x)\pi(y)$$ and write (again, $k$ denote the vertex of $e$ farthest from $0$)
$$W(e)\le CZ\left\{\begin{array}{l} N_+ k^{-a_+} (N_+-k+1)N_+^{a_+}(N_-^{1+a_-}+k^{1+a_+})\mbox{ if }  N_-<k\le N_+,\\k^{\eta-a_+} (N_+-k+1)N_+^{a_+}(N_-^{1+a_-}+k^{1+a_+})\mbox{ if } 0<k\le N_-,\\
|k|^{\eta-a_-} (N_--|k|+1)N_-^{a_-}(N_+^{1+a_+}+|k|^{1+a_-})\mbox{ if }  k<0. \end{array}\right.$$
Because, by assumption,  $Z\asymp N_+^{-1-a_+}$, this gives
$$W(e)\le C\left\{\begin{array}{l}N_+ k^{-a_+} (N_-^{1+a_-}+k^{1+a_+})\mbox{ if }  N_-<k\le N_+ ,\\k^{\eta-a_+}(N_-^{1+a_-}+k^{1+a_+})\mbox{ if } 0<k\le N_-,\\
|k|^{\eta-a_-} N_-^{1+a_-} \mbox{ if }  k<0. \end{array}\right.$$
By inspection, $W(e)\le C \max\{N_+^{2}, N_-^{1+a_-}\}$ where $C$ may have changed from the previous line.
Under our assumption, this gives the desired lower-bound on $\lambda$.
         
Next, we treat the case when $\min\{a_-,a_+\}\in (0,1)$. Without loss of generality we can assume that $a_-\le a_+$.  We have $Z^{-1}\asymp N_-^{1+a_-}+N_+^{1+a_+}$. 
We set $$w(k-1,k)=w(-k+1,-k)=(1+k)^{a_-/2},\;\;k>0.$$ This gives $|\gamma_{xy}|_w\le C(N_-+N_+)^{1-a_-}=D$ and
$$W(e)\le CD N_-^{1+a_-} $$ if $e=(-k+1,-k)$, $k>0$, and $$W(e)\le CD |k|^{a_--a_+} (N_-^{1+a_-}+ |k|^{1+a_+})$$
if $e=(k-1,k)$, $k>0$.  The maximum for $k>0$ is less than $CD(N_-+ N_+)^{1+a_-}$. All together, this yields
$W(e)\le C(N_-+N_+)^2$ as desired.

Next, assume that $a_-=a_+=1$ and $N_-\le  N_+$. In this case, we need to prove both an upper-bound and a lower-bound. We start with the lower-bound. For an edge  $e$ whose vertex furthest to $0$ is $k$, set
$$w(e)=\left\{\begin{array}{cl} |k|^{1/2} &\mbox{ if } k\in [-N_-, N_-],\\(N_+/\log N_-)^{1/2}
& \mbox{ if } k>N_-.\end{array}\right.$$
The $w$-length  $|\gamma_{xy}|_w$ of any path is bounded above by  
$$D=(3-N_-/N_+)( \log N_-)\le 2\log N_-.$$
The quantity $W(e)$  is bounded above by
$$W(e)\le C D\left\{\begin{array}{cl}  N_-^2 &\mbox{ if }  k\in [-N_-, N_+],\\
\frac{N_+}{k \log N_-}(N_-^2+k^2)
& \mbox{ if } k\in\{N_-,\dots,N_+\}.\end{array}\right.$$
It follows that 
$$W(e)\le C\max\{ N_-^2 \log N_-, N_+^2\}.$$

To obtain a matching upper-bound, we use the test function
$$f(k)=\left\{\begin{array}{cl} -(N_+^2\log N_+ )\sum_1^{|k|}\frac{1}{m} &\mbox{ if } k\in\{-N_-,\dots,0\},\\
c(N_-^2\log N_- )\sum_1^k\frac{1}{m} &\mbox{ if } k\in\{1,\dots,N_+\}, \end{array}\right.$$
where $c\asymp 1$ is chosen so that $\pi(f)=0$. For this test function,
$$\mathcal E(f,f)\asymp N_+^{-2}[(N_+^2\log N_+)^2 \log N_- +(N_-^2\log N_-)^2\log N_+]$$
and
$$\mbox{Var}_\pi(f)\asymp N_+^{-2} (N_+\log N_+)^2(N_-\log N_-)^2(N_+^2+N_-^2).$$
Because we assume $N_-\le N_+$, this gives
$$\lambda \le  \frac{C}{N_-^2\log N_-}.$$
We already know that $\lambda\le CN_+^{-2}$ and the stated result follows.

Finally, we consider the case when $a_-=1<a_+$. First, assume that $N_+^{1+a_+}\ge N_-^2$. We start with the lower-bound.   For  an edge $e$ whose vertex farthest from $0$ is $k$, we set
$$w(e)=\left\{\begin{array}{cl}  |k|^{1/2} &\mbox{ if } k\in \{-N_-,\dots,-1\},\\
k^{a_+/2}(\log N_-)^{-1/2} &\mbox{ if } k\in \{1,\dots, [N_-^{2/(1+a_+)}]\},\\
N_+^{1/2}(\log N_-)^{-1/2} &\mbox{ if } k\in \{[N_-^{2/(1+a_+)}],\dots, N_+\} .\end{array}\right.$$
These edge weights give a maximal path length of order $\log N_-$ and the constant $Z^{-1}$ is or order $N_+^{1+a_+}$. We estimate 
$$W(e)\le C\times \left\{\begin{array}{cl} N_-^2(\log N_-) &\mbox{ if } k\in \{-N_-,\dots,-1\},\\
N_-^2& \mbox{ if } k\in \{1,\dots, [N_-^{2/(1+a_+)}]\},\\
N_+(k^{a_+})^{-1}(N_-^2+k^{1+a_+}) &\mbox{ if } k\ge[N_-^{2/(1+a_+)}] .\end{array}\right.$$
This gives   $W(e)\le C(N_+^2+N_-^2\log N_-)$ and 
$$\lambda\ge \frac{c}{N_+^2+N_-^2\log N_-}.$$
An upper-bound is needed only when $N_-\ge N_+$. Because we also assume $N_-^2\le N_+^{1+a_+}$, we have $\log N_-\asymp \log N_+$ in this case. For the upper-bound we use the test function
$$f(k)=\left\{\begin{array}{cl} -N_+^{1+a_+}\sum_1^{|k|}1/m &\mbox{ if } k<0,\\
cN_-^{2}\sum_1^{|k|}1/m &\mbox{ if } k>0.\end{array}\right.$$
The constant $c\asymp 1$ is chosen so that $\pi(f)=0$. We have
$$\mathcal E(f,f)\asymp N_+^{-(1+a_+)}(N_+^{2(1+a_+)}\log N_- +N_-^4\log N_+)$$ and
$$\mbox{Var}_\pi(f)\asymp N_+^{-(1+a_+)}(N_+^{2(1+a_+)}N_-^2(\log N_- )^2+N_-^4 N_+^{1+a_+}(\log N_+)^2).$$
This gives
$$\lambda\le  C\frac{N_+^{2(1+a_+)}\log N_-}{N_+^{2(1+a_+)} N_-^2(\log N_-)^2}\asymp \frac{1}{N_-^2 \log N_-}.$$

We are left with the case when $N_+^{1+a_+}\le N_-^2$ which implies $N_+\le N_-$ because $a_+> 1$.
We pick $$w(e)=\left\{\begin{array}{cl} ( k^{a_+}/\log N_+)^{1/2} &\mbox{ if } k\in \{1,\dots,N_+\},\\
|k|^{1/2} &\mbox{ if } k\in \{-N_+,\dots, -1\},\\
N_-^{1/2}/(\log N_+)^{1/2} &\mbox{ if } k\in \{-N_-,\dots, -N+\}.\end{array}\right.$$
This gives  a maximal $w$-length for paths of order $\log N_+$ 
and we get
$$W(e)\le C(\log N_+) \times \left\{\begin{array}{cl} N_+^{1+a_+}/(\log N_+)&\mbox{ if } k\in \{1,\dots,N_+\},\\
 N_+^{1+a_+}  & \mbox{ if } k\in \{- N_+,\dots,-1\},\\
 N_-/(|k|\log N_+)(N_+^{1+a_+}+|k|^2)  & \mbox{ if } k\in \{- N_-,\dots,-N_+\}.
 \end{array}\right.$$
 It follows that  $W(e)\le C\max\{N_-^2, N_+^{1+a+}\log N_+\}$ and
 $$\lambda\ge \frac{c}{N_-^2+N_+^{1+a_+}\log N_+}.$$
 
Given that $N_+\le N_-$, we already have the upper-bound $\lambda\le \frac{C}{N_-^2}.$ It thus suffices to consider the case when. $N^{1+a_+}\le N_-^2\le N_+^{1+a_+}\log N_+$. This implies that $\log N_+\asymp \log N_-$. We use the test function
$$f(k)=\left\{\begin{array}{cl} -N_+^{1+a_+}\sum_1^{|k|}1/m &\mbox{ if } k<0,\\
cN_-^{2}\sum_1^{|k|}1/m &\mbox{ if } k>0.\end{array}\right.$$
The constant $c\asymp 1$ is chosen so that $\pi(f)=0$. We have
$$\mathcal E(f,f)\asymp N_-^{-2}(N_+^{2(1+a_+)}\log N_- +N_-^4\log N_+)$$ and
$$\mbox{Var}_\pi(f)\asymp N_-^{-2}(N_+^{2(1+a_+)}N_-^2(\log N_- )^2+N_-^4 N_+^{1+a_+}(\log N_+)^2)$$
This gives
$$\lambda\le  C\frac{N_-^{4}\log N_+}{N_-^4N_+^{1+a_+} (\log N_+)^2}\asymp \frac{1}{N_+^{1+a_+} \log N_+}.$$
\end{proof}

\section{Some basic examples in higher dimensions} 

The same basic techniques discussed above apply in any dimension but many complications arise. In particular, what choice of paths is appropriate becomes a  nontrivial question. Overall, we understand much less in two than in one dimension. We hope the examples treated below provide some illustration. We set these examples in the context  explained in Section \ref{setup}, starting from a target density $f$ over $[0,1]^n$, most of the time with $n=2$ for simplicity.

\subsection{Exponential of a linear function} \label{sec-exp1}
This family of examples is based on the function $f(x)=e^{ax+by}$, $a,b\in \mathbb R$ with $|a|+|b|>0$
(the case $a=b=0$ is not particularly interesting).  We treat it as a warm-up for the next example and for illustration. It can be studied in many different ways.
Recall that we consider the grid approximation, $B_N=\{1,\dots,N\}^2$, of the unit square
 equipped with the probability measure
$$\pi(x,y)= Z_N^{-1} F_N(x,y), \;F_N(x,y)= \exp\left( \frac{1}{N}(ax+by-(a+b)/2)\right).$$
Note that, if $Q_{x,y}=\left\{(u,v): \left|u-\frac{x-\frac{1}{2}}{N}\right|<\frac{1}{N}, \left|v-\frac{y-\frac{1}{2}}{N}\right|<\frac{1}{N}\right\}$,
$$\left|F_N(x,y)- N^2\int_{Q_{x,y}}f(u,v)dudv\right|\le  e^{\frac{|a|+|b|}{N}}\frac{|a|+|b|}{N} F_N(x,y).$$
Hence, it is natural to assume that $(|a|+|b|)/N\le 1$.

The Metropolis kernel $M(\mathbf k,\mathbf l)$ on $B=B_N$ satisfies$$Q(\mathbf k,\mathbf l )\asymp \min\{\pi(\mathbf k),\pi(\mathbf l)\}\asymp \pi(\mathbf k)$$
 for all $\mathbf k,\mathbf l$ such that
$|k_1-l_1|+|k_2-l_2|=1$. There are two cases to consider.  Assume first that $ab=0$. By symmetry there is no loss of generality to assume $b=0$. We do not treat this case in detail but note that it can be analyzed by comparison
with  the product of two one-dimensional chains, the uniform proposal chain itself in the $y$ coordinate and the
one-dimensional Metropolis chain for $e^{ax/N}$ in the $x$ direction. See \cite[Sect. 6.1-6.2]{What} and \cite{DSCcompR}.  This comparison  results in a spectral gap of order $1/N^2$ for the two-dimensional Metropolis chain with $b=0$. In the case when either one or both of $a$ and $b$ are small, one can also show (e.g., by comparison) that the spectral gap is of order $1/N^2$. When both $a$ and $b$ are of order $N$, we will show below that the spectral gap is of order $1$.

This illustrates a crucial fact regarding the procedure we describe which is based on a uniform square grid: the size $N$ of grid has to be taken large enough that the grid  provides a good approximation and not so large that it flattens the profile of the function being approximated to the point of making its special features disappear. In the present example, the special feature is the exponential nature of the function.   If the coefficients $a,b$ in this example have different order of magnitude, the correct choice would be to use a rectangular discretization that would take this into account.

We now treat the case when $\min\{|a|,|b|\} \geq \epsilon N$ where $\epsilon>0$ is fixed. Under this assumption $F_N$ has a unique local maximum which is also a global maximum and is located at one of the four corners, call it $\mathbf k_*$. Moreover, $\pi(\mathbf k_*)\asymp_\epsilon 1$.  Since it is clear by reason of symmetry that we can assume $a,b>0$, we will do so in all arguments below.  When $a,b>0$, $\mathbf k_*=(N,N)$, 
and 
\begin{eqnarray*}Z^{-1}&=&\sum_1^N\sum_1^N \exp\left( \frac{1}{N}(ax+by-(a+b)/2)\right)\\
&=&
e^{-\frac{a+b}{2N}}\left(\sum_{x=1}^N\sum_1^N e^{\frac{a}{N}x}\right)\left(\sum_1^N\sum_{y=1}^N e^{\frac{b}{N}y}\right)\\
&=&e^{\frac{a+b}{N}} e^{a+b} \frac{1-e^{-a(1+1/N)}}{1-e^{-a/N}} \frac{1-e^{-b(1+1/N)}}{1-e^{-b/N}}
\asymp_\epsilon  e^{a+b} \end{eqnarray*} and
$$\pi(\mathbf k_*)= Z e^{a+b}\asymp_\epsilon 1.$$
\begin{prop} Fix $\epsilon>0$. When $f(x,y)=e^{ax+by}$ with $|a|+|b|\le N$ and $\min\{|a|,|b|\}\ge \epsilon N$, the spectral gap $\lambda$ of the associated 
finite Metropolis chain on $B_N$ satisfies
$\lambda\asymp_\epsilon 1$
  \end{prop}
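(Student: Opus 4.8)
The plan is to establish the lower bound $\lambda \ge c_\epsilon$ via the path/weight technique of Proposition \ref{prop-W} with $A = B$, and to note that the matching upper bound $\lambda \le 1$ is automatic (being an eigenvalue gap with $\beta_1 \le 1$; more precisely $\lambda \le 2$ always, but here we only care about boundedness away from $0$ and $\lambda \le 1$ holds since $\pi$-averages of $M$ stay in $[-1,1]$). So the whole content is the lower bound, and the whole game is choosing the right paths and weights.

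First I would record the key geometric fact: with $a,b>0$ and $\mathbf k_* = (N,N)$, we have $\pi(\mathbf k) = Z e^{(a(x-N) + b(y-N))/N} \asymp_\epsilon e^{(a(x-N)+b(y-N))/N}$, so $\pi$ decays geometrically as we move away from the corner $\mathbf k_*$ along either coordinate, with rate $e^{-a/N} \le e^{-\epsilon}$ resp. $e^{-b/N} \le e^{-\epsilon}$ bounded away from $1$. Also $Q(e) \asymp_\epsilon \min\{\pi(\mathbf k),\pi(\mathbf l)\} \asymp_\epsilon \pi(\mathbf k) \vee \pi(\mathbf l)$ up to the fixed factor $e^{-\epsilon}$, since neighboring $\pi$-values differ by a bounded factor. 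The natural choice of paths $\gamma_{\mathbf k \mathbf l}$ is monotone "staircase" paths that always move toward $\mathbf k_*$ as much as possible: to go from $\mathbf k$ to $\mathbf l$, route through $\mathbf k_* = (N,N)$ (or, to be a bit more careful and keep paths short, through the point $(\max\{k_1,l_1\}, \max\{k_2,l_2\})$), first increasing the first coordinate to its target, then the second — always choosing the ordering of coordinate moves so that we go "uphill first, then downhill," i.e. $\pi$ is roughly monotone along the path, increasing to a peak and then decreasing. For the weight, I would try the simplest thing first, $w \equiv 1$, so that $|\gamma_{\mathbf k\mathbf l}|_w$ is just the number of edges, at most $2N$; if that loses a factor of $N$ I would instead weight edges by something like $w(e)^2 \asymp \pi(e)^{-1/2}$ or by $\min\{d(\mathbf k_*, \cdot)\}$ to make path lengths effectively $O(1)$ in the relevant weighted sense — but I expect a cleverer weight is needed and discuss that below.

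The main estimate is to bound, for a fixed oriented edge $e = (\mathbf a, \mathbf b)$ (say with $\pi(\mathbf a) \ge \pi(\mathbf b)$, using the symmetry reduction of the Remark), the quantity $W(e) = \frac{w(e)^2}{Q(e)} \sum_{\gamma_{\mathbf k\mathbf l} \ni e} |\gamma_{\mathbf k\mathbf l}|_w \pi(\mathbf k)\pi(\mathbf l)$. The crucial feature of the monotone-toward-the-corner routing is that if $e$ lies on $\gamma_{\mathbf k\mathbf l}$, then $\mathbf k$ and $\mathbf l$ both lie in a "corner rectangle" hanging off $e$ on the side away from $\mathbf k_*$, and moreover along the uphill portion $\pi(\mathbf k) \lesssim_\epsilon \pi(e)$ while on the downhill portion $\pi(\mathbf l) \lesssim_\epsilon \pi(e)$ with exponential decay in graph distance from $e$; since $Q(e) \asymp_\epsilon \max\{\pi(\mathbf a),\pi(\mathbf b)\} \asymp_\epsilon \pi$ at the high end of $e$, the ratio $\pi(\mathbf k)\pi(\mathbf l)/Q(e)$ is summable geometrically in $d(\mathbf k,e) + d(\mathbf l, e)$ in at least one of the two coordinate directions, which is what kills the bad $N$-dependence. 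The one direction that is genuinely dangerous is the "flat" direction: moving along $e$'s own coordinate line on the high side, $\pi$ does not decay, so the sum over $\mathbf k$ (or $\mathbf l$) in that direction can be of size $\asymp N \cdot \pi(e)$ unless the weight is chosen to supply the missing decay. I therefore expect the correct choice is a weight that grows toward the corner along the relevant lines, e.g. $w(e)^2 \asymp$ (distance-to-axis)-type factor, engineered exactly so that $|\gamma|_w = O(1)$ (geometric sum) while the $w(e)^2/Q(e)$ prefactor is still controlled; with such a weight every direction contributes a convergent geometric series and one gets $W(e) \le C_\epsilon$ uniformly in $e$ and $N$, hence $\lambda \ge 2/W \ge c_\epsilon$.

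The hard part is precisely this design of the weight function: one must simultaneously (i) make weighted path lengths $|\gamma_{\mathbf k\mathbf l}|_w$ bounded (so the weights must grow geometrically toward $\mathbf k_*$ along each axis, matching the $e^{-a/N}, e^{-b/N}$ decay rates of $\pi$), and (ii) keep $w(e)^2/Q(e)$ from blowing up while the number of $(\mathbf k,\mathbf l)$-pairs routed through $e$ with non-negligible $\pi(\mathbf k)\pi(\mathbf l)$ stays controlled. Because the two coordinate decay rates $a/N$ and $b/N$ are both $\Theta_\epsilon(1)$ but need not be equal, the weight should be a product of a function of $d$ to the $x=N$ edge and a function of $d$ to the $y=N$ edge, each tuned to its own rate; once that product structure is in place, all the sums in $W(e)$ factor into two one-dimensional geometric sums, each handled exactly as in the one-dimensional computations of Proposition \ref{prop-asymvalley} (the $a_->1$-type case, where $D_{a_-} \asymp 1$). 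After that, verifying $W(e) \le C_\epsilon$ is a bounded-geometric-series bookkeeping exercise with no remaining conceptual difficulty, and combined with the trivial upper bound it yields $\lambda \asymp_\epsilon 1$.
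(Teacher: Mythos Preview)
Your plan can be made to work, but the paper's proof is much shorter because of two choices you did not make. First, it takes $A=\{\mathbf k_*\}$ in Proposition~\ref{prop-W} rather than $A=B$; this is legitimate precisely because $\pi(\mathbf k_*)\asymp_\epsilon 1$, and it reduces $W(e)$ to a \emph{single} sum over starting points $\mathbf k$ (with monotone up-then-right paths $\gamma_{\mathbf k}$ from $\mathbf k$ to $\mathbf k_*$), so your ``flat direction'' worry---which is exactly the cost of letting the second endpoint range freely over $B$---never arises. Second, the weight is simply $w(e)=Q(e)^\theta$ for a fixed $\theta\in(0,1/2)$: since $\pi$ grows geometrically along each $\gamma_{\mathbf k}$, one gets $|\gamma_{\mathbf k}|_w\asymp_\epsilon\pi(\mathbf k)^{-2\theta}$ immediately, and then
\[
W(e)\;\asymp_\epsilon\; Q(e)^{2\theta-1}\sum_{\mathbf k:\,\gamma_{\mathbf k}\ni e}\pi(\mathbf k)^{1-2\theta}\;\asymp_\epsilon\; Q(e)^{2\theta-1}\pi(\mathbf n)^{1-2\theta}\;\asymp_\epsilon\;1,
\]
where $\mathbf n$ is the endpoint of $e$ farthest from $\mathbf k_*$. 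No separate tuning of axis weights to the two rates $a/N$ and $b/N$ is needed: the weight inherits the correct local decay directly from $Q$. Your $A=B$ route with product weights (or with the same weight $Q(e)^\theta$ and routing through $\mathbf k_*$) would also close, since $\sum_{\mathbf l\in B}\pi(\mathbf l)^{1-2\theta}\asymp_\epsilon 1$, but with more bookkeeping and no gain.
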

\begin{proof} The only thing to prove is the lower-bound and this can be done by using
Proposition \ref{prop-W} with $A=\{\mathbf k_*\}$ and weights $w(e)=Q(e)^\theta$ where $\theta<1/2$. For each $\mathbf k\in B$, we define the path $\gamma_{\mathbf k}$ to be  one of the (possibly  two) paths from $\mathbf k$ to $\mathbf k_*$
with only one turn and along which $\pi$ increases at each step. Without loss of generality, we assume $a,b>0$ so that  $\mathbf k_*=(N,N)$, and we  let the path $\gamma_{\mathbf k}$ from $\mathbf k$ to $\mathbf k_*$ be the path that go up to the top then right. 
The $w$-length $|\gamma_{\mathbf k}|_w$ satisfies 
$$|\gamma_{\mathbf k}|_w\asymp _\epsilon  \pi(\mathbf k)^{-2\theta}.$$
Fix an edge $e$ and let $\mathbf n$ be the vertex on $e$ farthest from $\mathbf k_*$ along the path. We have
\begin{eqnarray*}W(e)&=& \frac{w(e)^2}{Q(e)}\sum_{\mathbf k\in B, \gamma_{\mathbf k}\ni e} |\gamma_{\mathbf k}|_w\pi(\mathbf k)\\
&\asymp _\epsilon & Q(e)^{2\theta-1} \sum_{\mathbf k\in B, \gamma_{\mathbf k}\ni e} \pi(\mathbf k)^{1-2\theta}\\
&\asymp_\epsilon  &Q(e)^{2\theta-1} \pi(\mathbf n)^{1-2\theta} \asymp_\epsilon 1 .\end{eqnarray*}\end{proof}

\subsection{Further examples with exponential fall-off} \label{sec-exp2}
In this  subsection, we assume that
$f$ as the form $f(x)= e^{-g(x)}$ where $g$ has the following  crucial properties. The constants $a,A$ introduced in these properties are key parameters.
\begin{enumerate}
\item[(1)] There exists $A>0$ such that, for all $z=(x,y),z'=(x',y')\in [0,1]^2$, $|g(z)-g(z')|\le A\|z-z'\|$;
\item[(2)] There exists $0<a\le A$ such that, for some $z_0\in [0,1]^2$ and all $z\in [0,1]^2$ and $t\in [0,1]$, 
$$g(z_0)=0 \;\mbox{ and }\; g(z)-g(z+t(z_0-z)) \ge a t ;$$
\item[(3)] The ratio $A/a$ is bounded above by a fixed constant $C\ge 1$.
\end{enumerate}
So, essentially, $f$ has a maximum at $z_0$ with exponential fall-off. The condition $g(z_0)=0$ is not restrictive, it is a simple normalization.   Condition (1)
is a regularity and growth condition ($g$ is Lipschitz with constant $A$). The main part of condition (2) ensures a uniform exponential growth along straight lines ending at $z_0$ with rate $a$. It is clear from (1)-(2) that $0<a\le A<+\infty$. Condition (3) is crucial and ensures that the key parameters $a$ and $A$ 
are comparable in the sense that $A/a\asymp_C1$.

The discretization on the size $N$ grid  is the function
$$F_N(x,y)= e^{-g(((x-1/2)/N,(y-1/2)/N))},\;\;(x,y)\in B_N=\{1,\dots,N\}^2$$
and, in order, to have a good approximation, we need to choose $N$ of order at least $A$. In order to retain the exponential fall-off, we also need to have $N$ not much bigger than $a$. This means the we need to choose  $ \epsilon A\le N \le \epsilon^{-1} a$, say, and this is possible because $A/a\le C$. 
The positive real number $\epsilon$ can be taken of order $1/\sqrt{C}$ which we assume in what follows.

\begin{prop} \label{pro-exp2}Fix $C>0$. Let $f(z)=e^{-g(z)}$ with  $g$ having the properties {\em (1)-(2)-(3)} stated above with constant $C$  and key parameters $a,A$.  The spectral gap $\lambda$ of the associated  finite Metropolis chain on $B_N$  with $ N\asymp_C A\asymp_C a$ satisfies
$\lambda\asymp_{C} 1 .$
  \end{prop}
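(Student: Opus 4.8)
The plan is to imitate the proof of the previous proposition (the linear case $f=e^{ax+by}$ with both slopes large) but now using the abstract Lipschitz/growth hypotheses (1)-(3) in place of the explicit exponential structure. As there, the only task is the lower bound on $\lambda$, since the upper bound $\lambda\le 1$ is automatic. I would apply Proposition~\ref{prop-W} with the singleton set $A=\{\mathbf k_*\}$, where $\mathbf k_*\in B_N$ is the grid point closest to $z_0$. The first step is to record the basic estimates that follow from (1)-(3): writing $\pi(\mathbf k)=Z^{-1}F_N(\mathbf k)$, the Lipschitz bound (1) and the growth bound (2), together with $N\asymp_C A\asymp_C a$, give $F_N(\mathbf k)\asymp_C e^{-g(z_{\mathbf k})}$ with $g(z_{\mathbf k})$ comparable (up to additive $O_C(1)$) to $N$ times the Euclidean distance from $z_{\mathbf k}$ to $z_0$; summing a geometric-type series over the grid then yields $Z^{-1}\asymp_C 1$ and $\pi(\mathbf k_*)\asymp_C 1$, exactly as in the linear case. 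Also, for neighboring $\mathbf k,\mathbf l$ one has $Q(\mathbf k,\mathbf l)\asymp_C \min\{\pi(\mathbf k),\pi(\mathbf l)\}$, with the ratio $\pi(\mathbf l)/\pi(\mathbf k)$ bounded above and below by constants depending only on $C$ (since $A/N\le C'$).

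The second step is to choose the paths. For each $\mathbf k\in B_N$ I would take $\gamma_{\mathbf k}$ to be a ``staircase'' lattice path from $\mathbf k$ to $\mathbf k_*$ that stays within bounded lattice distance of the straight segment $[z_{\mathbf k},z_0]$ and along which the distance to $z_0$ is (essentially) monotone decreasing; this is the analogue of the ``up then right'' path, and hypothesis (2) is precisely what guarantees that $\pi$ is (up to a $C$-dependent factor) increasing along such a path, so that for every vertex $\mathbf m$ on $\gamma_{\mathbf k}$ we have $\pi(\mathbf m)\ge c_C\,\pi(\mathbf k)$. With the weight $w(e)=Q(e)^{\theta}$ for a fixed $\theta<1/2$, the $w$-length satisfies $|\gamma_{\mathbf k}|_w=\sum_{e\in\gamma_{\mathbf k}}Q(e)^{-2\theta}\asymp_C \pi(\mathbf k)^{-2\theta}$, because the terms $Q(e)^{-2\theta}\asymp_C \pi(\mathbf m_e)^{-2\theta}$ along the path decay geometrically toward the far end $\mathbf k$ which dominates the sum.

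The third step is the computation of $W(e)$. Fix an edge $e$, let $\mathbf n$ be the endpoint of $e$ farther from $\mathbf k_*$ along the paths through it, and note that every $\mathbf k$ with $\gamma_{\mathbf k}\ni e$ satisfies $\pi(\mathbf k)\le c_C^{-1}\pi(\mathbf n)$, while the number of such $\mathbf k$ at ``distance level'' $j$ beyond $\mathbf n$ is $O(j+1)$ with $\pi(\mathbf k)\asymp_C \pi(\mathbf n)e^{-cj}$ for these. Then
\begin{eqnarray*}
W(e)&=&\frac{w(e)^2}{Q(e)}\sum_{\mathbf k:\gamma_{\mathbf k}\ni e}|\gamma_{\mathbf k}|_w\,\pi(\mathbf k)\\
&\asymp_C& Q(e)^{2\theta-1}\sum_{\mathbf k:\gamma_{\mathbf k}\ni e}\pi(\mathbf k)^{1-2\theta}\\
&\asymp_C& Q(e)^{2\theta-1}\,\pi(\mathbf n)^{1-2\theta}\;\asymp_C\;1,
\end{eqnarray*}
the geometric sum in $j$ converging because $1-2\theta>0$ and $Q(e)\asymp_C\pi(\mathbf n)$. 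Proposition~\ref{prop-W} then gives $\lambda\ge 1/W\asymp_C 1$.

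The main obstacle is the path-counting/flow-control step: unlike the linear case, where the level sets of $g$ are straight lines and the staircase paths are transparent, here one only has the crude geometric information from (1)-(3), so I must argue carefully that (a) the staircase paths can be chosen so that each edge $e$ is used only by the $\mathbf k$'s lying in a thin cone ``behind'' $\mathbf n$ (bounding the congestion), and (b) monotonicity of the distance to $z_0$ along $[z_{\mathbf k},z_0]$ transfers, via (2) and the Lipschitz control (1), to genuine monotonicity of $\pi$ along the discretized path up to $C$-dependent constants. Both points are where the hypothesis $A/a\le C$ does the real work, keeping all the implied constants uniform in $N$; once they are in place, the summations above are routine geometric series exactly as in Proposition~\ref{sec-exp1}'s proof.
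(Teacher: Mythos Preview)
Your proposal is correct and follows essentially the same approach as the paper's proof: singleton target $A=\{\mathbf k_*\}$, staircase paths hugging the segment $[z_{\mathbf k},z_0]$, weights $w(e)=Q(e)^{\theta}$ with $\theta<1/2$, and the same two geometric-series estimates (one for $|\gamma_{\mathbf k}|_w$, one for $\sum_{\gamma_{\mathbf k}\ni e}\pi(\mathbf k)^{1-2\theta}$). Your identification of the main obstacle---the cone-type count $O(j+1)$ for the points $\mathbf k$ at level $j$ behind $\mathbf n$---is exactly the step the paper isolates as well.
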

\begin{proof} Let $z^N_0=(x_0^N,y_0^N)$ be the point of $B_N$ closest to $Nz_0=(Nx_0,Ny_0)$ and observe that $F_N(z_0^N)\approx_C 1$ because of properties (1)-(2) and the choice of $N$.  First, we show that $$Z_N^{-1}=\sum_{z\in B_N} F_N(z) \asymp _C F_N(z^N_0).$$  So it suffices to prove that $\sum_{z\in B_N} F_N(z)\le C_1$, independently of $a,A,N$ and the function $g$.  This follows easily from the fact that
$$F_N(z) =e^{-g( z')},\;\;z=(x,y)\in B_N,\;\;z'=(x',y')\in [0,1]^2,$$
where
\begin{equation} \label{eqz'}\left\{\begin{array}{c} x'=(x-1/2)/N,\\
y'=(y-1/2)/N.\end{array}\right.\end{equation}
Set $z'_0= (x^N_0/N,y^N_0/N)$ and write
 \begin{eqnarray*}g(z') &\ge &  a\|z'-z_0\|\\
 &\ge & a(\|z'-(z_0^N/N)\|-\|(z_0^N/N)-z_0\|)\\
&\ge & -\sqrt{2}\frac{a}{N}  + \frac{a}{N} \|z-z^N_0\|\\
&\ge & -\sqrt{2}\epsilon^{-1} + \epsilon \|z-z^N_0\|.\end{eqnarray*} 
It follows that
$$\sum _{B_N} F_N(z)\le e^{\sqrt{2}\epsilon^{-1}} \sum_{B_N} e^{-\epsilon \|z-z_0^N\|} \le C_2(\epsilon)$$
as desired.

Having proved that $Z_N^{-1}\asymp 1\asymp F_N(z^N_0)$, we can use Proposition \ref{prop-W} with the set $A=\{z^N_0\}\subset B=B_N$ (the set $A$ in Proposition \ref{prop-W} has, of course, nothing to do with the constants $a,A$ that define the key property of the function $g$).  
For each $z\in B_N$, we need a path $\gamma_z$ from $z$ to $z_0$ and we pick a discrete path that remains at distance at most $\sqrt{2}$ from the straight line from $z$ to $z^N_0$.  For each edge $e$, we pick the weight $w(e)=Q(e)^{\theta}$ with $\theta\in (0,1/2)$, e.g., $\theta=1/4$.  Here 
$Q(e)=\frac{1}{2}\min\left\{ \pi_N(e_-),\pi_N(e_+)\right\}$, $e=(e_-,e_+)$ (for some arbitrary orientation of the edge of the square grid, say, up and to the right).
By assumption (1) and the choice of $N$,   we have
$$  Q(e)\asymp _C  \pi_N(e_-)\asymp _C  \pi_N(e_+).$$
Observe that assumption (2) and the choice of $N$ show that the maximal $w$-length of one of our chose  paths $\gamma_z$ satisfies
$$|\gamma_z|_w=\sum_{e\in \gamma_z}
 Q(e)^{-2\theta} \le C_1\sum_{e\in \gamma_z} \pi(e_ -)^{-2\theta}\le C_1\pi(z)^{-2\theta}  \sum_{e\in \gamma_z} \left(\frac{\pi(z)}{\pi(e_-)}\right)^{2\theta}.$$
We have
$$\frac{\pi(z)}{\pi(e_-)}=e^{-(g(z')-g(e'_-))}$$ 
and, because $e'_-$ is at distance at most $\sqrt{2}/N$ of the straight line from $z$ to $Nz_0$
$$g(z') -g(e'_-)\ge  -C_2 (A/N) +a\|z-z_0\|/N.$$
If follows that
$$|\gamma_z|_w\le C_3(\epsilon) \pi(z)^{-2\theta}.$$
Finally, for any fixed edge $e$, we need to estimate
\begin{eqnarray*}W(e)&=& \frac{w(e)^2}{Q(e)}\sum_{z \in B_N, \gamma_{z}\ni e} |\gamma_z|_w\pi(z)\\
&\asymp _\epsilon & Q(e)^{2\theta-1} \sum_{z\in B_N, \gamma_{z}\ni e} \pi(z)^{1-2\theta} .\end{eqnarray*}
We claim that $$\sum_{z\in B_N, \gamma_{z}\ni e} \pi(z)^{1-2\theta} \le C_4(\epsilon) \pi(e_-)^{1-2\theta}.$$ 
Indeed, because $e_-$ is close to the straight line from $z$ to $Nz_0$, for any  integer $\ell$, there are at most order 
$\ell $ point $z$ such that  $\|z-e_-\|\asymp \ell$.
Noting that $1-2\theta>0$, it follows that 
\begin{eqnarray*}\sum_{z\in B, \gamma_{z}\ni e} \pi(z)^{1-2\theta}&\le & \pi(e_-)^{1-2\theta}
\sum_{z\in B_N, \gamma_{z}\ni e} \left(\frac{\pi(z)}{\pi(e_-)}\right)^{1-2\theta}\\
&\le & C_5(\epsilon)  \pi(e_-)^{1-2\theta} \sum \ell e^{-C_6a\ell /N} \\
&\le & C_7(\epsilon)  \pi(e_-)^{1-2\theta}. \end{eqnarray*}
This is the desired inequality. It gives $ W(e)\le C_7(\epsilon)$  and Proposition \ref{pro-exp2} follows.
\end{proof}

\subsection{No worse than the flat spectral gap $1/N^2$}
In \cite[Proposition 6.3]{What}, it is proved that for the Metropolis chain based on  nearest neighbor random walk on $\{1,\dots,N\}$ with target a distribution $\pi$ that has a unique local maximum, the spectral gap is greater or equal to $1/2N^2$. We do not know of a similar result in higher dimension and it seems to be difficult to capture different cases for which the spectral gap is at least or order $1/N^2$ with a single argument.
The following proposition captures one particular class of examples. 

In this subsection, we explicitly work in a fixed dimension $n\ge 2$. For $A\ge 1$ and $\epsilon,\eta>0$, let $\mathcal C_{n,\epsilon,\eta}(A)$ be the class of continuously differentiable function $f$ on $[0,1]^n$ such that
\begin{equation}\label{C1}
\min_{[0,1]^n}\{f\}>0,\;\;\sup_{[0,1]^n, i=1,\dots,n}\{ |\partial_i \log f|\}\le A,\end{equation}
\begin{equation} \label{C2}\forall\,x,y\in [0,1]^n, t\in [0,1],\;\; f(x+t(y-x))\ge \epsilon \min\{f(x),f(y)\},\end{equation}
and
\begin{equation}\label{C3}\eta \|f\|_\infty\le \int_{[0,1]^n} f(x) dx.\end{equation}
As in the previous example, the parameter $A$ is a key parameter and one can think of it as being large (otherwise the result is relatively obvious).
The second hypothesis is a weak form of convexity of the level sets of $f$. If the sets $\{x: f(x)>t\}$ were convex then, certainly,
we would have
 $$\forall\,x,y\in [0,1]^n, t\in [0,1],\;\; f(x+t(y-x))\ge \ \min\{f(x),f(y)\}$$
 and we could take $\epsilon =1$. 
 
 It may be useful to observe that the function $f(x,y)=e^{A(x+y)}$ does not belong to $\mathcal C_{2,\epsilon,\eta}(A)$ because (\ref{C3}) fails but, for any 
 $\theta>0$, $f(x,y)= (1+A(x+y))^{\theta}$ does belong to $\mathcal C_{2,1,\eta_\theta}(A\theta )$ for some easily computed explicit $\eta_\theta$.  Other examples satisfying (\ref{C1})-(\ref{C2})-(\ref{C3}) are $f(x,y)=(1+ ((Ax)^2+ (Ay)^4))^{-\theta}$, $\theta>0$, and $f(x,y)= e^{A\min\{ x+y, 1\}}.$
 
 Next, as always, we pick an integer $N\ge A$ to ensure a good approximation between the discretized function
 $$F_N(x)= f\left(\left(N^{-1}(x_i-1/2)\right)_1^n\right),\;\; x\in B_N=\{1,\dots,N\}^n$$
 and $f$. 
 
 \begin{prop} \label{pro-flat} Fix $\epsilon,\eta>0$ and the dimension $n$.  Let $N\ge A\ge 1$. For any function $f\in\mathcal C_{n,\epsilon,\eta}(A)$, the spectral gap $\lambda$ of the associated  finite Metropolis chain on $B_N$   satisfies
$\lambda\ge  c_{n,\epsilon,\eta}/N^2 .$
  \end{prop}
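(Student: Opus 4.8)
The plan is to apply Proposition~\ref{prop-W} with the path technique, choosing the reference set $A=B_N$ (no singleton this time, since $f$ need not have a dominant point) and using weights of the form $w(e)=Q(e)^\theta$ for some $\theta\in(0,1/2)$, say $\theta=1/4$, exactly as in the proof of Proposition~\ref{pro-exp2}. First I would record the basic consequences of the hypotheses and the choice $N\ge A$: since $|\partial_i\log f|\le A$ and the grid has mesh $1/N\le 1/A$, adjacent values $F_N(\mathbf k)$ and $F_N(\mathbf l)$ differ by at most a bounded multiplicative factor, so $Q(e)\asymp \min\{\pi(\mathbf k),\pi(\mathbf l)\}\asymp\pi(\mathbf k)\asymp\pi(\mathbf l)$ for any edge $e=(\mathbf k,\mathbf l)$. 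Hypothesis~\eqref{C3} together with $\min f>0$ gives $Z_N^{-1}=\sum_{B_N}F_N\asymp N^n\|f\|_\infty\asymp N^n \min_{B_N}F_N$ up to $n,\epsilon,\eta$-constants, so $\pi(\mathbf k)\asymp N^{-n}$ uniformly, i.e.\ $\pi$ is comparable to the uniform measure on $B_N$.

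Next I would specify the paths. For an ordered pair $(\mathbf x,\mathbf y)\in B_N^2$, let $\gamma_{\mathbf x\mathbf y}$ be a lattice path that stays within bounded $\ell^\infty$-distance of the Euclidean segment $[\mathbf x,\mathbf y]$ (a ``digitized segment''); this has length $\asymp_n |\mathbf x-\mathbf y|$. The role of hypothesis~\eqref{C2} is precisely to control $\pi$ along such a path: for any $\mathbf z$ on (or near) the segment, $F_N(\mathbf z)\gtrsim_\epsilon \min\{F_N(\mathbf x),F_N(\mathbf y)\}$ up to an $A/N$-bounded error from the mesh, hence $\pi(\mathbf z)\gtrsim \min\{\pi(\mathbf x),\pi(\mathbf y)\}$. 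Combined with $Q(e)\asymp\pi(e_-)$ this yields, for the $w$-length,
$$|\gamma_{\mathbf x\mathbf y}|_w=\sum_{e\in\gamma_{\mathbf x\mathbf y}}Q(e)^{-2\theta}\asymp \sum_{e\in\gamma_{\mathbf x\mathbf y}}\pi(e_-)^{-2\theta}\le C\,|\mathbf x-\mathbf y|\,\bigl(\min\{\pi(\mathbf x),\pi(\mathbf y)\}\bigr)^{-2\theta}\le C' N^{n+1}\bigl(\min\{\pi(\mathbf x),\pi(\mathbf y)\}\bigr)^{-2\theta},$$
using $\pi\asymp N^{-n}$ once more; here $|\mathbf x-\mathbf y|\le nN$.

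Then I would estimate $W(e)$ for a fixed edge $e$ with, say, $e_-$ the vertex closer to the ``$\mathbf x$-end.'' Using the symmetry/breaking remark I can restrict the sum to pairs with $\pi(\mathbf x)\le\pi(\mathbf y)$, so $\min\{\pi(\mathbf x),\pi(\mathbf y)\}=\pi(\mathbf x)$ and $\pi(\mathbf x)\lesssim_\epsilon\pi(e_-)\asymp Q(e)$ (since $\mathbf x$ lies near $e_-$ on its own path and $\pi$ grows along the path toward... wait, more carefully: $e$ lies on $\gamma_{\mathbf x\mathbf y}$, $\mathbf x$ is near the far end of $e$, so $\pi(\mathbf x)\gtrsim_\epsilon\min\{\pi(\mathbf x),\pi(\mathbf y)\}$ is automatic and in the other direction $\pi(e_-)\gtrsim_\epsilon\min\{\pi(\mathbf x),\pi(\mathbf y)\}=\pi(\mathbf x)$). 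Hence
$$W(e)\asymp Q(e)^{2\theta-1}\sum_{\substack{\mathbf x,\mathbf y:\ \gamma_{\mathbf x\mathbf y}\ni e\\ \pi(\mathbf x)\le\pi(\mathbf y)}}|\gamma_{\mathbf x\mathbf y}|_w\,\pi(\mathbf x)\pi(\mathbf y)\le C\,Q(e)^{2\theta-1}\,N^{n+1}\sum_{\mathbf x,\mathbf y:\ \gamma_{\mathbf x\mathbf y}\ni e}\pi(\mathbf x)^{1-2\theta}\pi(\mathbf y).$$
Now I bound the double sum: for each of the $\asymp N^n$ choices of $\mathbf y$, the vertices $\mathbf x$ with $e\in\gamma_{\mathbf x\mathbf y}$ are confined to a bounded-width tube (a ``cone'' of digitized segments through $e$ directed away from $\mathbf y$), and since $\pi(\mathbf x)^{1-2\theta}\pi(\mathbf y)\asymp N^{-n(2-2\theta)}$ with $\le N^n$ relevant $\mathbf x$'s per $\mathbf y$ — actually the tube has $\asymp N^{n-1}$ lattice points of a given distance profile, but summing $\pi(\mathbf x)^{1-2\theta}\asymp N^{-n(1-2\theta)}$ over all $\le CN^{n}$... — this is the step that needs care. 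The clean way: $\sum_{\mathbf x:\gamma_{\mathbf x\mathbf y}\ni e}\pi(\mathbf x)^{1-2\theta}\le C(\epsilon)\pi(e_-)^{1-2\theta}\cdot(\#\{\text{relevant }\mathbf x\})$-type bound fails without using fall-off; instead one uses that along the tube the distances $\|\mathbf x-e_-\|$ range over $1,2,\dots$ with $\asymp\ell^{n-1}$ points at distance $\ell$, and $\pi(\mathbf x)^{1-2\theta}\asymp N^{-n(1-2\theta)}$ is flat, so this sum is $\asymp N^{-n(1-2\theta)}\cdot N^{n-1}\cdot N\asymp N^{-n(1-2\theta)}N^{n}$... which combined with $Q(e)^{2\theta-1}\asymp N^{n(1-2\theta)}$ and the $N^{n+1}$ factor and the $N^n$ sum over $\mathbf y$ gives $W(e)\lesssim N^{n+1}\cdot N^{n}\cdot N^{n}\cdot N^{-n(1-2\theta)}\cdot N^{n(1-2\theta)}\cdot N^{-2n\cdot?}$. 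The bookkeeping of powers of $N$ is the main obstacle, and the expected outcome is $W\le C_{n,\epsilon,\eta}N^2$, whence Proposition~\ref{prop-W} (with $A=B_N$) yields $\lambda\ge 2/W\ge c_{n,\epsilon,\eta}/N^2$.

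The main obstacle, then, is not conceptual but combinatorial-geometric: controlling, for a fixed edge $e$, the number and spread of digitized segments $\gamma_{\mathbf x\mathbf y}$ passing through $e$, and organizing the resulting double sum $\sum\pi(\mathbf x)\pi(\mathbf y)$ so that the powers of $N$ collapse to exactly $N^2$ after dividing by $Q(e)\asymp N^{-n}$ and inserting the path-length factor $\asymp N$ and the weight factor $Q(e)^{2\theta}\asymp N^{-2n\theta}$. The flatness $\pi\asymp N^{-n}$ (from \eqref{C3}) is what makes this tractable: all the $\pi$-values are comparable, so the estimate reduces to a counting problem, $\sum_{e\in\gamma_{\mathbf x\mathbf y}}1\asymp N$ per pair and $\#\{(\mathbf x,\mathbf y):\gamma_{\mathbf x\mathbf y}\ni e\}\asymp N^{2n-1}\cdot N = N^{2n}$ in the right weighted sense, giving $W(e)\asymp N^{-2n\theta}\cdot N^{n}\cdot N\cdot N^{2n\theta}\cdot N^{-2n}\cdot N^{2n}\cdot N^{-n}\cdot$(nothing more) $\asymp N^{2}$. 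Hypotheses \eqref{C1} and \eqref{C2} enter only to guarantee $Q(e)\asymp\pi(e_\pm)$ and that $\pi$ does not dip along digitized segments, i.e.\ to justify replacing all $\pi$-values and $Q$-values along a path by a single comparable quantity; once that reduction is made, the flat-measure counting delivers the bound, and the constant depends only on $n,\epsilon,\eta$ as claimed.
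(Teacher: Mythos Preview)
Your central claim---that (C1)--(C3) force $\pi(\mathbf k)\asymp N^{-n}$ uniformly---is false, and this is what derails your bookkeeping. Condition \eqref{C3} yields only the \emph{one-sided} bound $\pi(\mathbf k)\le C_{n,\eta}N^{-n}$ (via $F_N(\mathbf k)\le\|f\|_\infty\le\eta^{-1}\!\int f\asymp_n \eta^{-1}N^{-n}Z_N^{-1}$); there is no companion lower bound. For instance $f(x,y)=(1+A(x+y))^2$, which the paper explicitly places in $\mathcal C_{2,1,\eta_2}(2A)$, has $\min F_N=1$ and $\max F_N\asymp A^2$, so $\pi$ is nowhere near uniform. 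Your reduction to a ``flat-measure counting problem'' is therefore not available, and neither is your pair count $\#\{(\mathbf x,\mathbf y):\gamma_{\mathbf x\mathbf y}\ni e\}\asymp N^{2n}$, which is off by a factor $N^{n-1}$ (the correct order is $N^{n+1}$).

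The paper's proof sidesteps all of this by taking \emph{trivial} weights $w\equiv 1$ rather than $w=Q^\theta$. The $w$-length is then just $|\gamma_{\mathbf x\mathbf y}|\le nN$, and
\[
W(e)\le \frac{nN}{Q(e)}\sum_{\gamma_{\mathbf x\mathbf y}\ni e}\pi(\mathbf x)\pi(\mathbf y).
\]
Hypothesis \eqref{C2} (the ``no-dip'' property along segments) is used exactly once, to get $\pi(\mathbf x)\pi(\mathbf y)/Q(e)\le C_{n,\epsilon}\max\{\pi(\mathbf x),\pi(\mathbf y)\}$; then the one-sided bound from \eqref{C3} gives $\max\le C_{n,\eta}N^{-n}$, reducing everything to the pure count $\sum_{\gamma_{\mathbf x\mathbf y}\ni e}1$. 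For that count the paper uses the \emph{distance} symmetry break $d(\mathbf x,e)\le d(\mathbf y,e)$ (not your $\pi(\mathbf x)\le\pi(\mathbf y)$): under it, for each $\mathbf y$ the admissible $\mathbf x$'s lie in a tube of bounded width through $e$ (the constraint kills the cone-widening on the far side), so there are $O_n(N)$ of them, hence $\le C_nN^{n+1}$ pairs and $W(e)\le C_{n,\epsilon,\eta}N^2$. If you insist on $w=Q^\theta$, the factors $Q(e)^{2\theta}$ and $(\min\pi)^{-2\theta}$ simply cancel against each other via \eqref{C2} and you land on the same estimate; the weights buy nothing here because, unlike in Proposition~\ref{pro-exp2}, there is no exponential decay along paths to exploit.
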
 
\begin{proof} For any pair of points $x,y$ in the discrete box $B_N$, pick a discrete path $\gamma_{xy}$ in $B_N$ which remains always at distance at most $\sqrt{n}/N$ of the straight line  joining the points and assume without loss of generality that $\gamma_{yx}$ is the same path in reverse. Here $\sqrt{n}/N$ is the length of the diagonal of the $n$ dimensional cube if side size $1/N$. Note that, at any point $z\in B_N$  along this discrete path, the value taken by $F_N$ at $z$, $F_N(z)$, satisfies 
$$F_N(z) \asymp_{n} f(z'_{xy})$$
where $z'_{xy}$ is the point  closest to $z'= (N^{-1}(z_i-1/2))_1^n)\in [0,1]^n$ along the straight line from $x'$ to $y'$ in $[0,1]^n$. This is because
$F_N(z)=f(z')$ and 
$$|\log (f(z')/f(z'_{xy}))|\le A\sqrt{n}/N
.$$
This will allow us to use  (a version of) the second hypothesis (\ref{C2}) along the discrete path $\gamma_{xy}$.

We now apply Proposition \ref{prop-W} with set $A=B_N$ and trivial weight $w(e)=1$. Obviously, the maximal path length is no more than $nN\asymp _n N$ and 
$$W(e)\le  \frac{nN}{Q(e)}\sum_{x\in B_N,y\in B_N \atop {\gamma_{xy}\ni e}}\pi(x)\pi(y) $$
where $\pi=Z_NF_N$ is our target probability distribution, the normalized form of $F_N$ and $Q(e)\asymp \pi(e_-)\asymp \pi(e_+)$ because of (\ref{C1}) and the fact that $A\le N$.  By hypothesis (\ref{C2}) and the remark above, we have
$$\frac{\pi(x)\pi(y)}{\pi(e_+)}\le C_n \epsilon^{-1} \sup_z \{\pi(z)\}.$$
Because of (\ref{C1})-(\ref{C3}), we  have
$$\pi(z) =Z_N f(z') \le \eta^{-1} Z_N \int_{[0,1]^n} f d\mu$$
and 
$$\int_{[0,1]^n} fd\mu \le C'_n N^{-n} \sum_{\xi\in B_N}F_N(\xi)= C'_n N^{-n} Z_N^{-1}.$$
This gives
$$\pi(z)\le  \eta^{-1} C'_n N^{-n}$$
and
$$W(e)\le  C''_n \epsilon^{-1}\eta^{-1} N^{-n}  \sum_{x\in B_N,y\in B_N \atop {\gamma_{xy}\ni e}}1.$$ 
By symmetry,
$$ \sum_{x\in B_N,y\in B_N \atop {\gamma_{xy}\ni e}}1\le 2 \max_e\left\{
  \sum_{x\in B_N,y\in B_N, d(x,e)\le d(y,e)\atop {\gamma_{xy}\ni e}}1 \right\}$$
  A simple geometric argument based on the nature of the paths $\gamma_{xy}$ (they stay close to the straight line joining $x$ to $y$), implies
  that, for any $y$ there are at most  $C'''_n N$  possible points $x$ satisfying the conditions required in the sum and thus
  $$  \sum_{x\in B_N,y\in B_N, d(x,e)\le d(y,e)\atop {\gamma_{xy}\ni e}}1 \le C'''_n N^{n+1}.$$  
  Putting all these pieces together gives
  $$W(e) \le C_n (\epsilon \eta)^{-1} N^2$$
  as desired.
  \end{proof}
  
\section{The valley effect in dimension 2}  \label{sec-valley}
We close this work  with a two-dimensional example that is reminiscent to the simplest version of the examples in Section \ref{sec-dim1}. See also \cite{Nash}.  For this example, it is convenient to work on the cube $[-1,1]^2$ and the associated discrete box 
$\mathcal B_N=\{-N+1,\dots,N\}^2$.  Because this leads to some slightly unusual coordinate shifts, we draw a small $N$ example for illustration and check. 

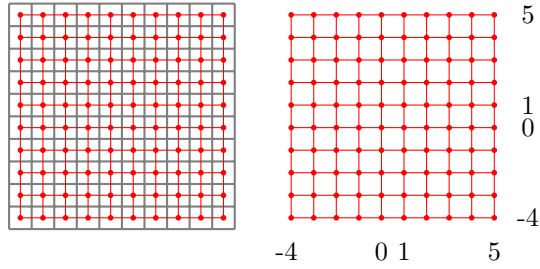
\begin{figure}[h]
\begin{tikzpicture}[scale=.3] 
\draw [help lines, thick] (-5,-5) grid (5,5);
\foreach \x in {-4.5,...,4.5}, \draw[red] (-4.5,\x) -- (4.5,\x);
 \foreach 
\x in {-4.5,...,4.5}, \draw[red] (\x,.-4.5) -- (\x,4.5);

  \foreach \x in {-4.5,...,4.5} \foreach \y in {-4.5,...,4.5} 
 \draw[fill,red] (\x,\y) circle [radius=.1];

\foreach \x in {-4.5,...,4.5}, \draw[red] (7.5,\x) -- (16.5,\x);
 \foreach 
\x in {7.5,...,16.5}, \draw[red] (\x,-4.5) -- (\x,4.5);

  \foreach \x in {7.5,...,16.5} \foreach \y in {-4.5,...,4.5} 
 \draw[fill,red] (\x,\y) circle [radius=.1];
\node at (7.3,-6) {-4}; \node at (16.5,-6) {5}; \node at (11.5,-6) {0};  \node at (12.5,-6) {1};
\node at (18,-4.5) {-4}; \node at (18,4.5) {5}; \node at (18,-.5) {0};  \node at (18,.5) {1};

\end{tikzpicture}
\caption{Grid approximation $\mathcal B_N=\{-N+1,\dots,N\}^2$ of the box $[-1,1]^2$ at level $N=5$: The red grid has $2N$ vertical lines and $2N$ horizontal lines; the grey grid decomposes $[-1,1]^2$ into $(2N)^2$ little squares.}\label{fig-V1}
\end{figure}

The model function we want to consider is 
 $$f(x,y) = (A|x+y|+1)^{\alpha}$$ where $A$ is a large constant and  $\alpha$ is a fixed non-negative constant  (in fact, the case of a negative $\alpha$ is covered by the results of the previous section).  Using the map $(x_1,x_2)\mapsto N^{-1}(x_1-1/2,x_2-1/2)$ from $\mathcal B_N$ to $[-1,1]^2$, this yields the probability measure on 
\begin{equation}\label{pialpha}
\pi = Z_NF_N\text{ with } F_N(x_1,x_2) = \left(\frac{A}{N}|x_1+x_2-1|+1\right)^{\alpha} \mbox{ on } \mathcal B_N.\end{equation}
The constant $Z_N$ is given by 
$$Z_N^{-1} = \sum_{x_1=-N+1}^N \sum_{x_2=-N+1}^N \left(\frac{A}{N} |x_1+x_2-1|+1\right)^{\alpha} \asymp A^\alpha N^2.$$ 
 Examples of  such $\pi$ are illustrated below, with $N = 5$ and $\alpha = 0.5,\;2$. 
\begin{center}
\begin{figure}   \includegraphics[width=.5\textwidth]{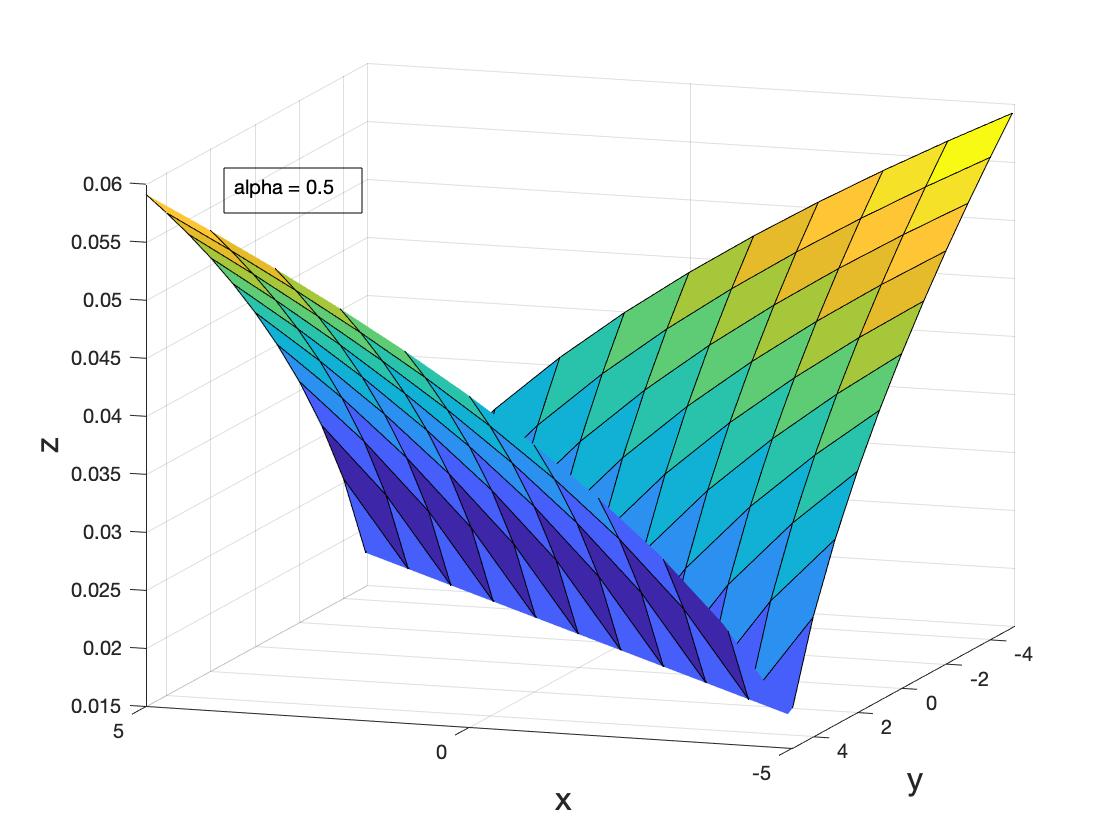}\includegraphics[width=.5\textwidth]{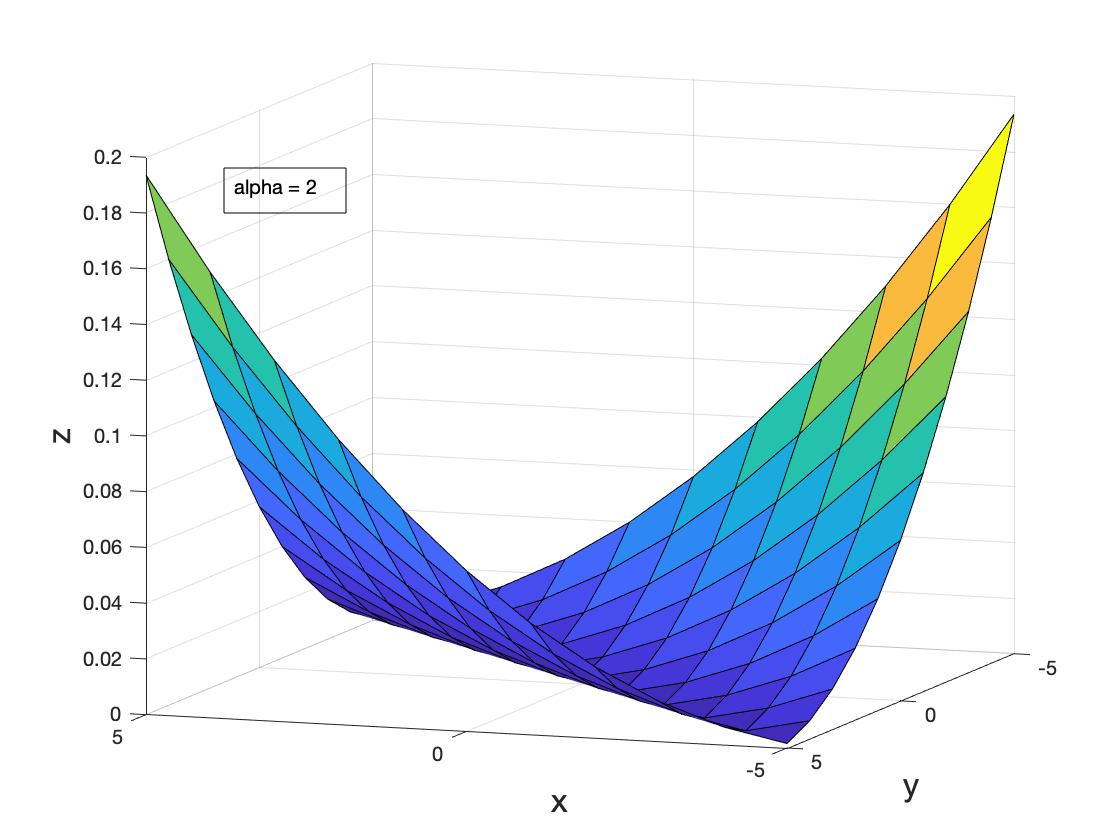}
\label{fig-V2}  \caption{The measure $\pi$ at (\ref{pialpha}) for $\alpha=0.5$ and $\alpha=2$}
\end{figure}\end{center}In fact, we will treat the following natural variation on these examples. Let $L$ be a straight line through the origin in $[-1,1]^2$ and let
\begin{equation} \label{falpha2}
f(x)= (1+ Ad_2(x,L))^\alpha, \;\alpha>0,\end{equation}
where $d_2(x,L)$ is the Euclidean distance from $x=(x_1,x_2)\in [-1,1]^2$ to the line $L$.   
As before, we set $\pi = Z_NF_N$ with \begin{equation}\label{pialpha2}
 F_N(x_1,x_2) = \left[1+Ad_2\left(\left((x_1-1/2)/N,(x_2-1/2)/N\right),L\right)\right]^{\alpha} \mbox{ on } \mathcal B_N.\end{equation}
The constant $Z_N$ is again of order $A^\alpha N^2$.

\begin{prop} Fix $\epsilon\in (0,1)$. For all  $N\in [\epsilon A, A/\epsilon]$,  the  Metropolis chain for the probability measure $\pi$ on $\mathcal B_N$ defined at  {\em (\ref{pialpha2})}has spectral gap 
$$\lambda \asymp_\epsilon \begin{cases}
\frac{1}{N^2} & \alpha < 1, \\
\frac{1}{N^2 \log(N)} & \alpha =1, \\
\frac{1}{N^{\alpha + 1}} & \alpha > 1.
\end{cases}$$
\end{prop}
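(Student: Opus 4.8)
The plan is to combine an upper bound via well-chosen test functions with a matching lower bound via the weighted path technique of Proposition~\ref{prop-W}, exactly paralleling the one-dimensional valley analysis of Proposition~\ref{prop-asymvalley} and Lemma~\ref{lem-2}. The key geometric point is that the level sets of $f$ in \eqref{falpha2} are strips parallel to the line $L$, so along the direction of $L$ the measure $\pi$ is essentially flat (of ``width'' $\asymp N$), while in the direction perpendicular to $L$ it behaves like the one-dimensional valley measure $(1+|t|)^\alpha$ on an interval of length $\asymp N$. After the rotation that sends $L$ to a coordinate axis, one should think of $\mathcal B_N$ as (up to bounded distortion) a box of size $\asymp N\times N$ on which $\pi(x)\asymp Z_N(1+|s|)^\alpha$, where $s$ is the signed distance to $L$ scaled by $N/A\asymp_\epsilon 1$; here $Z_N\asymp A^\alpha N^2 \asymp_\epsilon N^{\alpha+2}$.

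For the upper bound I would take test functions that depend only on the perpendicular coordinate $s$ and are constant along $L$, thereby reducing the Rayleigh quotient computation to the one-dimensional one already carried out in the proof of Proposition~\ref{prop-asymvalley}. Concretely: for $\alpha<1$ use (a discretized, rotated version of) $f\equiv$ a linear function of $s$; for $\alpha=1$ use $f(s)\asymp \sum_{m=1}^{|s|} 1/m$ with the sign adjusted so $\pi(f)=0$; for $\alpha>1$ use the indicator-type function that is $1-\xi$ on one side of $L$ and $-\xi$ on the other, with $\xi=\pi(\{s\le 0\})\asymp 1$. In each case the Dirichlet form only sees the $\asymp N$ ``columns'' crossing near $L$, contributing an extra factor $N$ relative to the $1$-dimensional computation in both $\mathcal E(f,f)$ and $\mathrm{Var}_\pi(f)$, so the quotient $\mathcal E(f,f)/\mathrm{Var}_\pi(f)$ is unchanged and one reads off $\lambda\le C_\epsilon/N^2$, $C_\epsilon/(N^2\log N)$, $C_\epsilon/N^{\alpha+1}$ respectively.

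For the lower bound I would apply Proposition~\ref{prop-W} with $A=\mathcal B_N$ (so $\lambda\ge 2/W$) and choose paths $\gamma_{xy}$ that first move within the strip containing $x$ parallel to $L$ until lined up with $y$, then move perpendicular to $L$ across the valley to $y$ (one turn, with the convention $\gamma_{yx}=\gamma_{xy}$ reversed so the symmetry remark applies). For the weight I would mimic the one-dimensional choice: on an edge $e$ whose endpoint farther from $L$ has perpendicular coordinate $k$, set $w(e)=(1+|k|)^{\min\{1,\alpha\}/2}$ for $|k|$ up to $\asymp N$ (there is no second regime here since both sides have comparable length $\asymp N$), so that $|\gamma_{xy}|_w\asymp D_\alpha$ with $D_\alpha\asymp N^{1-\alpha}$, $\log N$, or $1$ according to $\alpha<1$, $=1$, $>1$. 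Then $W(e)=\frac{w(e)^2}{Q(e)}\sum_{x,y:\,\gamma_{xy}\ni e}|\gamma_{xy}|_w\pi(x)\pi(y)$, and one bounds the constrained sum by splitting edges into those parallel to $L$ and those perpendicular to it. Using $Q(e)\asymp\pi(e_-)\asymp Z_N(1+|k|)^\alpha$ and counting: through a perpendicular edge at level $k$ pass $\asymp N$ choices of the $L$-direction coordinate times the pairs $x$ (with $|$perp$(x)|\le |k|$) and $y$ (with $|$perp$(y)|\ge|k|$), which after multiplying by $Z_N^2\prod\pi$ and by $w(e)^2/Q(e)$ gives exactly the one-dimensional estimate $W(e)\le C_\epsilon D_\alpha(1+|k|)^{1+\min\{1,\alpha\}}\cdot(\text{bounded})$ with the extra powers of $N$ from the $L$-direction cancelling against $Z_N\asymp N^{\alpha+2}$; through a parallel edge the valley weight is even more favorable. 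Taking the maximum over $e$ yields $W\asymp_\epsilon D_\alpha N^{1+\min\{1,\alpha\}}\wedge N^2$, i.e. $W\asymp_\epsilon N^2$, $N^2\log N$, $N^{\alpha+1}$, and Proposition~\ref{prop-W} gives the stated lower bound.

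The main obstacle will be the bookkeeping in the lower bound: unlike the one-dimensional case, an edge $e$ now lies in a two-parameter family of paths, and one must verify that the ``extra'' summation over the $\asymp N$ transverse positions really does contribute only a factor $N$ (not more) uniformly in $e$ — this requires using that the one-turn paths are rigid enough that, for fixed $e$, the coordinate along $L$ of both endpoints is essentially pinned. I would handle this by the symmetry reduction \eqref{symred} (restricting to $\pi(x)\le\pi(y)$, equivalently $x$ at least as far from $L$ as $y$) and then estimating the two resulting sums by comparison with the corresponding one-dimensional sums times the trivial $O(N)$ factor for the free transverse coordinate; a secondary technical point, already present in Proposition~\ref{pro-exp2} and Proposition~\ref{pro-flat}, is controlling the bounded multiplicative distortion coming from the rotation of $L$ and from $d_2$ versus the $\ell^1$ graph metric, which the hypothesis $N\in[\epsilon A, A/\epsilon]$ makes harmless.
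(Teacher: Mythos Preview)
Your upper bound is essentially the paper's: a test function depending only on the signed distance to $L$, antisymmetric across the valley, reduces the Rayleigh quotient to the one-dimensional computation.

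Your lower bound has a genuine gap. The claim $|\gamma_{xy}|_w\asymp D_\alpha$ is false. With weight $w(e)\asymp(1+|k|)^{\alpha/2}$ (your exponent $\min\{1,\alpha\}/2$ is a slip: for $\alpha>1$ it would give $D_\alpha\asymp\log N$, not $1$), the segment of $\gamma_{xy}$ that runs \emph{parallel} to $L$ sits at a fixed level $k=\mathrm{perp}(x)$, so each of its $\asymp N$ edges contributes $1/w(e)^2\asymp(1+|k|)^{-\alpha}$ and the segment has $w$-length $\asymp N/(1+|\mathrm{perp}(x)|)^\alpha$, which is $\asymp N$ when $x$ is near the valley. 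Thus $|\gamma_{xy}|_w\asymp D_\alpha + N/F_N(x)$, and the second term is not dominated by the first. This extra ``flat'' contribution is exactly what distinguishes the two-dimensional problem from the one-dimensional one; the paper's lower-bound proof is mostly devoted to it (the terms $|y_2-x_2|/F_N(y)$ in the vertical-$L$ case and the $v_{xy}$ analysis for the configurations $\mathfrak R_1,\dots,\mathfrak R_4$ in the general case). If you track it, the parallel-segment term contributes only $O(N^2)$ to $W(e)$ because the factor $(1+|\mathrm{perp}(x)|)^{-\alpha}$ cancels against $\pi(x)$, but you have not done this computation---you have asserted the one-dimensional length bound and moved on.

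A second issue is that ``move parallel to $L$'' does not describe a lattice path when $L$ is oblique; zigzag approximations work, but then a single ``parallel'' edge of the lattice can also be perpendicular-type depending on the local step, and the clean split you invoke blurs. The paper sidesteps this by using axis-aligned L-shaped paths (two sides of the rectangle $R_{xy}$) and paying for it with the case analysis on the angle of $L$: in the diagonal case both axis directions cross $F_N$-levels at full rate and $|\gamma|_w\asymp D_\alpha$ really does hold; in the nearly-vertical case one direction is flat and the $N/F_N$ term appears explicitly; the general case interpolates via the configurations $\mathfrak R_i$. Your rotated-coordinates picture is the right heuristic, but it hides rather than removes the flat-segment contribution.
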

\begin{proof}[Proof of the upper-bound] We explain the upper-bound in the case $A=N$ and for example (\ref{pialpha}). A similar argument works for measures of the type (\ref{falpha2})-(\ref{pialpha2}).
We have 
$$\lambda = \inf_f\left\{\frac{\mathcal{E}(f|f)}{\|f\|_2^2}\;:\; \pi(f) = 0 \text{ and } \pi(f^2)\neq 0 \right\}.$$ Consider the test function 
$$f(x)=f(x_1,x_2)= \begin{cases}
\sum_{\ell=2}^{k} \frac{1}{(\ell-1) ^\alpha} & \text{if } x_1+x_2 =k\in\{2,\dots, 2N \}\\
 0&\text{if } x_1+x_2 =1,\\-\sum_{\ell=0}^{k} \frac{1}{(1+\ell)^\alpha} & \text{if } x_1+x_2=-k \in 
\{-2N+2,\dots,0\}.
\end{cases}$$
Notice that this function $f$ is antisymmetric with respect to the diagonal  $x_1+x_2=1$ in $\{-N+1,\dots,N\}^2$. Since $\pi$ is symmetric with respect to the same diagonal, $f$ has mean $0$.  We  note that 
$$|f(x_1,x_2)|\asymp \begin{cases}
1 &\text{if } \alpha > 1 \text{ and } x_1+x_2-1\neq 0, \\
\log (1+ |x_1+x_2-1|) &\text{if } \alpha =1, \\
(1+|x_1+x_2-1|)^{1-\alpha} &\text{if } \alpha < 1 \text{ and } x_1+x_2-1\neq 0.
\end{cases}$$
It follows that
$$\|f\|_2^2
\asymp \begin{cases}
1 &\text{if } \alpha > 1, \\
(\log (1+N))^2 &\text{if } \alpha =1, \\
(1+N )^{2-2\alpha} &\text{if } \alpha < 1.
\end{cases} $$
Next, we note that for $x\sim y$ (i.e., $M(x,y)>0$) in $\mathcal B_N$, we have
$$|f(x)-f(y)|\asymp (1+|x_1+x_2-1|)^{-\alpha}.$$ This allows us to estimate  
$$\mathcal{E}(f,f) \asymp  \begin{cases}
N^{-\alpha-1} &\text{if } \alpha > 1, \\
N^{-2} \log (1+N) &\text{if } \alpha =1, \\
(1+N )^{-2\alpha} &\text{if } \alpha < 1.
\end{cases} $$
Together, these computations of $\|f\|_2^2$ and $\mathcal E(f,f)$ give the desired upper-bound on $\lambda$. 
\end{proof}

We treat the lower-bound in the case of measures of type 
(\ref{falpha2})-(\ref{pialpha2}) because this case requires a few interesting adjustments compared to  (\ref{pialpha}) but we will start with the case (\ref{pialpha}) as a warm-up. To obtain a lower-bound on $\lambda$, we return to the formula $\lambda \geq \frac{2}{W}$ as in Proposition \ref{prop-W}.

 \subsubsection*{Choice of paths} 
 Consider $\pi$ is as in (\ref{pialpha2}) with  $L=\{(x_1,x_2): ax_1+bx_2=0\}$, $a^2+b^2=1$.
Without loss of generality, we can assume that $a\ge |b|$.  This means that the line $L$  is more vertical than horizontal
because the slope is $a/b$ (vertical if $b=0$).
 
 Each pair $(x,y)\in B_N^2$ determines a rectangle $R_{xy}$ with sides parallel to the two axes.  If $x,y$ are on the same side of $L$, $\gamma_{xy}$ follows the two consecutive sides of the rectangles that are farthest away from $L$. 
If $x,y$ are on different sides of $L$ and $d_2(x,L)\le  d_2(y,L)$, we must have one of the following four configurations, $\mathfrak R_i$, $1\le i\le 4$,  which focuses on the two sides of $R_{xy}$ which intersect the line $L$.

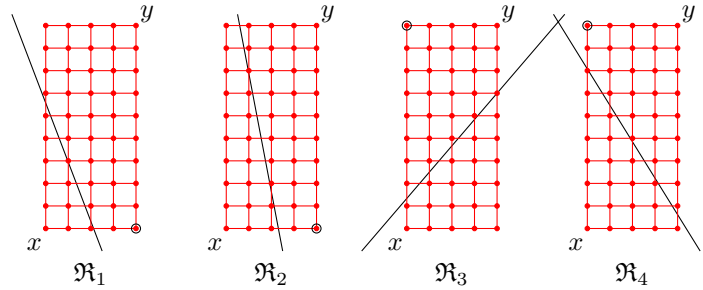
\begin{figure}[h]
\begin{tikzpicture}[scale=.3] 

\draw (10,-5.5) -- (6,5);
\foreach \x in {-4.5,...,4.5}, \draw[red] (7.5,\x) -- (11.5,\x);
 \foreach \x in {7.5,...,11.5}, \draw[red] (\x,-4.5) -- (\x,4.5);
 \foreach \x in {7.5,...,11.5} \foreach \y in {-4.5,...,4.5} 
 \draw[fill,red] (\x,\y) circle [radius=.1]; 
 \node at (9.5,-6.5) {$\mathfrak R_1$}; \node at (7,-5.2) {$x$}; \node at (12, 5) {$y$};
 \draw (11.5,-4.5) circle [radius=.2];

\draw (18,-5.5) -- (16,5);

\foreach \x in {-4.5,...,4.5}, \draw[red] (15.5,\x) -- (19.5,\x);
 \foreach \x in {15.5,...,19.5}, \draw[red] (\x,-4.5) -- (\x,4.5);
 \foreach \x in {15.5,...,19.5} \foreach \y in {-4.5,...,4.5} 
 \draw[fill,red] (\x,\y) circle [radius=.1]; 
 \node at (17.5,-6.5) {$\mathfrak R_2$}; \node at (15,-5.2) {$x$}; \node at (20, 5) {$y$};
 \draw (19.5,-4.5) circle [radius=.2]; 
 
\draw (21.5,-5.5) -- (30.5,5);

\foreach \x in {-4.5,...,4.5}, \draw[red] (23.5,\x) -- (27.5,\x);
 \foreach \x in {23.5,...,27.5}, \draw[red] (\x,-4.5) -- (\x,4.5);
 \foreach \x in {23.5,...,27.5} \foreach \y in {-4.5,...,4.5} 
 \draw[fill,red] (\x,\y) circle [radius=.1]; 
 \node at (25.5,-6.5) {$\mathfrak R_3$}; \node at (23,-5.2) {$x$}; \node at (28, 5) {$y$};
\draw (23.5,4.5) circle [radius=.2];

\draw (36.5,-5.5) -- (30,5);

\foreach \x in {-4.5,...,4.5}, \draw[red] (31.5,\x) -- (35.5,\x);
 \foreach \x in {31.5,...,35.5}, \draw[red] (\x,-4.5) -- (\x,4.5);
 \foreach \x in {31.5,...,35.5} \foreach \y in {-4.5,...,4.5} 
 \draw[fill,red] (\x,\y) circle [radius=.1]; 
 \node at (33.5,-6.5) {$\mathfrak R_4$}; \node at (31,-5.2) {$x$}; \node at (36, 5) {$y$};
\draw (31.5,4.5) circle [radius=.2];

\end{tikzpicture}
\caption{The four configrurations $\mathfrak R_i$, $1\le i\le 4$, when $x,y$ are on different sides of $L$ and $d_2(x,L)\le d_2(y,L)$. In each case the black circle show the corner used by $\gamma_{xy}$.} \label{fig-L}
\end{figure}

In all cases, $\gamma_{xy}$  starts with the side at $x$ that crosses $L$ with preference for the horizontal crossing.
If $d_2(x,L)=d_2(y,L)$ make a choice to break the tie. In all cases, $\gamma_{yx}$ is chosen to be $\gamma_{xy}$
travelled in reverse order.

\subsubsection*{Choice of weights}  Given an edge $e=(x,y)\in E$ and $\alpha>0$ as in (\ref{pialpha})-(\ref{pialpha2}), set
$$w(e)=(1+\max\{d_2(x,L),d_2(y,L)\})^{\alpha/2} \asymp F_N(x)^{1/2} \asymp F_N(y)^{1/2}.$$
Given that these choices of path and weight satisfy the assumptions in Remark \ref{symred}, in order to bound $W$, it suffices to bound $W(e)$ (for all $e$) or  (also, for all $e$)
\begin{equation}\label{Wtilde}
\widetilde{W}(e)=\frac{w(e)^2}{Q(e)} \sum_{x,y\in B_N, \pi(x)\le \pi(y) \atop {\gamma_{xy}\ni e}}|\gamma_{xy}|_w\pi(x)\pi(y).
\end{equation}
This differs from $W(e)$ by the extra condition $ \pi(x)\le \pi(y)$ in the summation.

\begin{proof}[Proof of the lower-bound on $\lambda$  for {\em (\ref{pialpha})}]  In this case, $L$ joins diagonally opposite corners.  What makes the case { (\ref{pialpha})} simpler than the general case is the computation of the $w$-length of paths. Indeed, the $w$-diameter of $B_N$ is easily computed to satisfy
$$\mbox{diam}\asymp_\alpha D(\alpha,N)= \left\{\begin{array}{cl}  1  &\mbox{ if } \alpha>1,\\
\log N &\mbox{ if } \alpha=1,\\
 N^{1-\alpha} &\mbox{ if } \alpha\in (0,1).\end{array}\right.$$
 This easily follows from computing the length of any vertical and horizontal line in $B_N$.  It is possible to track down the dependence on $\alpha$ in these computations but we do not do so. Moreover, the same holds true in the general case (\ref{pialpha2}) as long as  the angle of the line $L$ with each of the axis is bounded away from $0$ (uniformly in $N$).
 Now, because of the choice of the weight, we have
 $$W(e)\le  C N^{\alpha+2} D(\alpha,N) \sum_{x,y\in B_N  \atop {\gamma_{xy}\ni e}}\pi(x)\pi(y).$$
 It is a simple matter to very that
\begin{equation} \label{1/N}
\sum_{x,y\in B_N  \atop {\gamma_{xy}\ni e}}\pi(x)\pi(y) \le C'N^{-1}.\end{equation}
This is because the condition $\gamma_{xy}\ni e$  for some fixed $e$ places either $x$ or $y$ on a line parallel to one of the axes.  Together, these computations give
$$W(e)\le C_\alpha  \left\{\begin{array}{cl}  N^{1+\alpha}  &\mbox{ if } \alpha>1,\\
N^2 \log N &\mbox{ if } \alpha=1,\\
 N^{2} &\mbox{ if } \alpha\in (0,1).\end{array}\right.$$
and this provides the desired lower bound on $\lambda$. \end{proof}

\begin{proof}[Proof of the lower-bound on $\lambda$  when $L$ is vertical] 
To understand why the previous argument does not work in general, consider the case when $L$ is vertical.  In this case,
assuming $\pi(x)\le \pi(y)$, 
the length of a path going from $x=(x_1,x_2) $ to $y=(y_1,y_2)$ with  $|x_1|<|y_1|$ is of order
$$|\gamma_{xy}|_w\asymp_\alpha  \frac{|y_2-x_2|}{F_N(y)} + \left\{\begin{array}{cl}  1  &\mbox{ if } \alpha>1,\\
\log (1+ |y_1-x_1| ) &\mbox{ if } \alpha=1,\\
 |y_1-x_1|^{1-\alpha} &\mbox{ if } \alpha\in (0,1).\end{array}\right.$$
 Using $\widetilde{W}(e)$ instead of $W(e)$ for convenience, we write
 $$\widetilde{W}(e)\le C_\alpha (W_1(e)+W_2(e))$$
 where 
 $$W_1(e)=  N^{\alpha+2}\sum_{x,y\in B_N, \pi(x)\le \pi(y) \atop {\gamma_{xy}\ni e}} \frac{|y_2-x_2|}{F_N(y)} \pi(x)\pi(y) $$
  and (see the definition of $D(\alpha,N)$ above)
   $$W_2(e)=  N^{\alpha+2}D(\alpha,N) \sum_{x,y\in B_N, \pi(x)\le \pi(y)  \atop {\gamma_{xy}\ni e}}\pi(x)\pi(y).$$
 As above, (\ref{1/N}) gives
 $$W_2(e)\le  C'_\alpha  \left\{\begin{array}{cl}  N^{1+\alpha}  &\mbox{ if } \alpha>1,\\
N^2 \log N &\mbox{ if } \alpha=1,\\
 N^{2} &\mbox{ if } \alpha\in (0,1).\end{array}\right.$$
For $W_1(e)$, the computation is slightly different depending on whether $e$ is horizontal or vertical. In both cases, we have
$$W_1(e)\le C'_\alpha N   \sum_{x,y\in B_N, \pi(x)\le \pi(y) \atop {\gamma_{xy}\ni e}}\pi(x)\le C_\alpha'' N^2.$$
Observe that the upper-bound for $W_2(e)$ always dominates that on $W_1(e)$. The desired result follows in this case. 
This argument remains valid as long as the line $L$ crosses the top and bottom sides at bounded distance from their respective mid-point (uniformly in $N$). \end{proof}

\begin{proof}[Proof of the lower-bound on $\lambda$, in general] 
Now, consider the general case. So far, we have treated the cases $a=|b|$ (more generally, $a\asymp b$) and $b=0$ (more generally, $|b|\le C/N$ for some fixed $C$).
 
To treat the general case, we partition the pairs $(x,y)$ into three subsets:  $P_0$ is the set of pairs for which $\pi(x)\le \pi(y)$ and $x$ and $y$ lie on the same side of $L$. The second subset, $P_{12}$, is the set of pairs on different sides of $L$ for which $\pi(x)\le \pi(y)$ and $L$ is crossed horizontally. Such pairs (after the use of some symmetries) corresponds to configurations $\mathfrak R_1, \mathfrak R_2$ in Figure \ref{fig-L}. The third and last subset, $P_{34}$, is the set of pairs on different sides of $L$ for which $\pi(x)\le \pi(y)$ and $L$ is crossed vertically. Such pairs (after the use of some symmetries) corresponds to configurations $\mathfrak R_3, \mathfrak R_4$ in Figure \ref{fig-L}. We need to bound  $\widetilde{W}(e)$ at (\ref{Wtilde}) from above and we split the sum in (\ref{Wtilde}) into three parts corresponding to the the contributions of $P_0,P_{12}$ and $P_{34}$ which we call $\widetilde{W}_0(e)$, $\widetilde{W}_{12}(e)$ and $\widetilde{W}_{34}(e)$.

\subsubsection*{Contribution of $P_0$ }   The length of any paths associated with $(x,y)\in P_0$ is bounded by
$$|\gamma_{xy}|_w\le C_\alpha \left( \frac{N}{F_N(x)}+ D(\alpha,N) \right).$$
It follows that (the computation is similar to the one done above in the case when $L$ is vertical; the constant $C_\alpha$ may change from line to line) 
$$\widetilde{W}_0(e) \le C_\alpha N^2 +   C_\alpha  \left\{\begin{array}{cl}  N^{1+\alpha}  &\mbox{ if } \alpha>1,\\
N^2 \log N &\mbox{ if } \alpha=1,\\
 N^{2} &\mbox{ if } \alpha\in (0,1).\end{array}\right. $$
 Note that the second term always dominates.
 
 \subsubsection*{Contribution of $P_{12}$ }   

The contribution to $\widetilde{W}_{12}(e)$ of the horizontal $w$-length of paths is always bounded by
$$  C_\alpha  \left\{\begin{array}{cl}  N^{1+\alpha}  &\mbox{ if } \alpha>1,\\
N^2 \log N &\mbox{ if } \alpha=1,\\
 N^{2} &\mbox{ if } \alpha\in (0,1).\end{array}\right. $$
 So, we concentrate on the vertical $w$-length of the paths. For a given $e$, call the corresponding sum
 $\widetilde{W}^v_{12}(e)$ (this is the vertical component of $\widetilde{W}_{12}(e)$). 
 \begin{figure}[h]
\begin{tikzpicture}[scale=.2] 

\draw (10,-5.5) -- (6,5);
\foreach \x in {-4.5,...,4.5}, \draw[red] (7.5,\x) -- (11.5,\x);
 \foreach \x in {7.5,...,11.5}, \draw[red] (\x,-4.5) -- (\x,4.5);
 \foreach \x in {7.5,...,11.5} \foreach \y in {-4.5,...,4.5} 
 \draw[fill,red] (\x,\y) circle [radius=.1]; 
 \node at (9.5,-6.5) {$\mathfrak R_1$}; \node at (7,-5.3) {$x$}; \node at (12, 5.1) {$y$};
 \draw (11.5,-4.5) circle [radius=.2];

\draw (18,-5.5) -- (16,5);

\foreach \x in {-4.5,...,4.5}, \draw[red] (15.5,\x) -- (19.5,\x);
 \foreach \x in {15.5,...,19.5}, \draw[red] (\x,-4.5) -- (\x,4.5);
 \foreach \x in {15.5,...,19.5} \foreach \y in {-4.5,...,4.5} 
 \draw[fill,red] (\x,\y) circle [radius=.1]; 
 \node at (17.5,-6.5) {$\mathfrak R_2$}; \node at (15,-5.3) {$x$}; \node at (20, 5.1) {$y$};
 \draw (19.5,-4.5) circle [radius=.2];

\end{tikzpicture}
\caption{The configrurations $\mathfrak R_1,\mathfrak R_2$}  \label{fig-L12}
\end{figure}
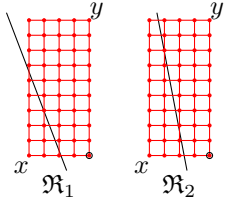
 
 Call $z$ the turning point on $\gamma_{xy}$ ($z=x$ if there is no turning point). When $(x,y)\in P_{12}$, the $w$-length of the vertical component of the path  $\gamma_{xy}$   is of order
$$ v_{xy}= \sum_0^{|y_2-x_2|} \frac{1}{(1+\sqrt{d_2(z,L)^2+ (bk)^2})^\alpha} \asymp \int _0^{|y_2-x_2|} \frac{ds}{(1+\sqrt{d(z,L)^2+ (bk)^2})^\alpha}.$$
 Hence, $v_{xy}$ is  bounded above by
 $$v_{xy}\le C_\alpha  \left\{\begin{array}{cl}  \frac{1}{|b|} \frac{1}{F_N(z)}  &\mbox{ if } \alpha>1,\\
 \frac{1}{|b|} \log\left(\frac{F_N(y)}{F_N(z)}\right)&\mbox{ if } \alpha=1,\\
  \frac{1}{|b|} \frac{d_2(y,L)}{F_N(y)}&\mbox{ if } \alpha\in (0,1).\end{array}\right.$$ 
 In the case $\alpha\in (0,1)$, $d(y,L)\le |b|N$ and it follows that 
 $v_{xy} \le N/F_N(y)$.  As discussed earlier, this gives $\widetilde{W}^v_{12}(e)\le C N^2$.
 In the case $\alpha=1$, 
 $$v_{xy}\le \frac{1}{|b|}\log \left(\frac{(1+d_2(y,L)}{1+d_2(z,L)}\right) \le C \log \left(1 + C\frac{ |b||y_2-z_2|}{1+d_2(y,L)}\right)\le C' \frac{N}{F_N(y)}.$$
 This means that the earlier computations applied again gives $\widetilde{W}^v_{12}(e)\ \le CN^2$.
 In the last case, $\alpha>1$, we consider two sub-cases. For those $x,y$ such that $d_2(z,L)\ge d_2(x,L)$, we have  $v_{xy}\le CN/F_N(x)$  and 
 $$N^{\alpha+2}\sum_{x,y\in P_{12}, d_2(x,L)\le d_2(z,L) \atop {\gamma_{xy}\ni e}} v_{xy}\pi(x)\pi(y)\le
 CN\sum_{x,y\in P_{12} \atop {\gamma_{xy}\ni e}}\pi(y) \le C'N^2.$$ For those $x,y$ such that $d_2(z,L)<d_2(x,L)$, we have $|y_1-x_1|\le bN$ and it follows that
 \begin{eqnarray*}N^{\alpha+2}\sum_{x,y\in P_{12}, d_2(z,L)\le d_2(x,L) \atop {\gamma_{xy}\ni e}} v_{xy}\pi(x)\pi(y)
& \le &C |b|^{-1}N^{\alpha+2} \sum_{x,y\in P_{12}, |y_1-x_1|\le bN, \atop {\gamma_{xy}\ni e}}\pi(x)\pi(y) \\
&\le & C' |b|^{-1} N^{\alpha+2} |b|N^{-1} \le C'N^{\alpha+1}.\end{eqnarray*}
 All together, this shows that
 $$
 \widetilde{W}^v_{12}(e)\le  C_\alpha  \left\{\begin{array}{cl}  N^{1+\alpha}  &\mbox{ if } \alpha>1,\\
 N^{2} &\mbox{ if } \alpha\in (0,1],\end{array}\right. $$ 
 and thus
 $$\widetilde{W}_{12}(e)\le   C_\alpha \left\{\begin{array}{cl}  N^{1+\alpha}  &\mbox{ if } \alpha>1,\\
N^2 \log N &\mbox{ if } \alpha=1,\\
 N^{2} &\mbox{ if } \alpha\in (0,1).\end{array}\right. $$

 \subsubsection*{Contribution of $P_{34}$}    
 As in the treatment of $\widetilde{W}_{12}$ above, the contribution of the horizontal $w$-length of the paths  to $\widetilde{W}_{34}(e)$ is always bounded by
$$  C_\alpha  \left\{\begin{array}{cl}  N^{1+\alpha}  &\mbox{ if } \alpha>1,\\
N^2 \log N &\mbox{ if } \alpha=1,\\
 N^{2} &\mbox{ if } \alpha\in (0,1).\end{array}\right. $$
 So, we concentrate on the vertical $w$-length of the paths. For a given $e$, call the corresponding sum
 $\widetilde{W}^v_{34}(e)$ (this is the vertical component of $\widetilde{W}_{34}(e)$). 
  
\begin{figure}[h]
\begin{tikzpicture}[scale=.2] 
\draw (21.5,-5.5) -- (30.5,5);

\foreach \x in {-4.5,...,4.5}, \draw[red] (23.5,\x) -- (27.5,\x);
 \foreach \x in {23.5,...,27.5}, \draw[red] (\x,-4.5) -- (\x,4.5);
 \foreach \x in {23.5,...,27.5} \foreach \y in {-4.5,...,4.5} 
 \draw[fill,red] (\x,\y) circle [radius=.1]; 
 \node at (25.5,-6.5) {$\mathfrak R_3$}; \node at (23,-5.3) {$x$}; \node at (28, 5.1) {$y$};
\draw (23.5,4.5) circle [radius=.2];
 
\draw (36.5,-5.5) -- (30,5);

\foreach \x in {-4.5,...,4.5}, \draw[red] (31.5,\x) -- (35.5,\x);
 \foreach \x in {31.5,...,35.5}, \draw[red] (\x,-4.5) -- (\x,4.5);
 \foreach \x in {31.5,...,35.5} \foreach \y in {-4.5,...,4.5} 
 \draw[fill,red] (\x,\y) circle [radius=.1]; 
 \node at (33.5,-6.5) {$\mathfrak R_4$}; \node at (31,-5.3) {$x$}; \node at (36, 5.1) {$y$};
\draw (31.5,4.5) circle [radius=.2];
\end{tikzpicture}
\caption{The configrurations $\mathfrak R_3$ and $\mathfrak R_4$} \label{fig-L34}
\end{figure}
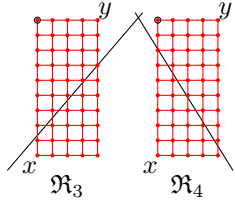
 Computing as in the case of $P_{12}$, the vertical contribution  $v_{xy}$ to the $w$-length of $\gamma_{xy}$ is  bounded above by
 $$v_{xy}\le C_\alpha  \left\{\begin{array}{cl}  \frac{1}{|b|}   &\mbox{ if } \alpha>1,\\
 \frac{1}{|b|} \log F_N(y)&\mbox{ if } \alpha=1,\\
  \frac{1}{|b|} \frac{d_2(y,L)}{F_N(y)}&\mbox{ if } \alpha\in (0,1).\end{array}\right.$$  
In addition, because  $(x,y)\in P_{34}$, we must have $d_2(y,L),d_2(x,L)\le bN$ and $|y_1-x_1|\le bN$.
  Now, in order to compute $\widetilde{W}_{34}^v(e)$ when $\alpha\in (0,1)$, we proceed exactly as before and we get
  $\widetilde{W}_{34}^v(e)\le CN^2$ as desired.   In the case $\alpha=1$, we have
\begin{eqnarray*}\widetilde{W}_{34}^v(e)&\le & CN^{\alpha+2}\sum_{x,y\in P_{34},  \atop {\gamma_{xy}\ni e}} v_{xy}\pi(x)\pi(y)\\
&\le &
 C'|b|^{-1} \log (|b|N) N^{3}\sum_{x,y\in P_{34} \atop {\gamma_{xy}\ni e}}\pi(x)\pi(y)\\ & \le & C''N^2\log(|b| N).\end{eqnarray*}
 Here, we have used the fact that the summation in $x$ is necessarily reduced to a range of order at most $|b|N$ and that, for each  $e$, either $x$ or $y$ is located along a single line parallel to an axis (as usual in these computations).
 
 Finally, in the case $\alpha>1$,
 \begin{eqnarray*}\widetilde{W}_{34}^v(e)&\le & CN^{\alpha+2}\sum_{x,y\in P_{34},  \atop {\gamma_{xy}\ni e}} v_{xy}\pi(x)\pi(y) \le
 C'|b|^{-1}N^{\alpha+2 }\sum_{x,y\in P_{34} \atop {\gamma_{xy}\ni e}}\pi(x)\pi(y)\\ & \le & C'' N^{\alpha+1}\end{eqnarray*} 
 because 
 $$\sum_{x,y\in P_{34} \atop {\gamma_{xy}\ni e}}\pi(x)\pi(y)\le C|b| N^{-1}.$$
 This last inequality uses again the fact that the summation in $x$ is necessarily reduced to a range of order at most $|b|N$ and that, for each  $e$, either $x$ or $y$ is located along a single line parallel to an axis.  
  \end{proof}
  
 \section{Concluding remarks}  Working on the cube $[0,1]^2$ and its discrete approximation, we have identified a number of simple examples of target probability densities illustrating different behavior of the Metropolis chain based on simple random walk. More precisely, we provided matching upper- and lower-bounds for the spectral gap $\lambda$ of these Metropolis chains. The spectral gap $\lambda$, and its inverse, the so-called relaxation time $1/\lambda$, are key parameters in understanding the running time of the corresponding Monte-Carlo algorithms.  However, the directly relevant parameter is the so-called mixing time, either in total variation distance, or maximum distance (or $L^2$-distance).  Namely, referring to (\ref{Ht}), the mixing time in total variation is often define as
 $$T_{\mbox{\tiny TV}}=\inf\left\{t:  \sup_x \sum_y|H_t(x,y)-\pi(y)|\le 1/e\right\}$$
  (here $e$ denotes the number $e$, the base of the natural logarithm) whereas
  $$T_\infty=\inf\left\{t:  \sup_{x,y}\left|\frac{H_t(x,y)}{\pi(y)}-1\right|\le 1/e\right\}.$$
  With this notation, it is well-known that (see, e.g., \cite{LP,StF}, $\pi_*$ is the minimal value taken by $\pi(x)$)
 \begin{equation}\label{mix}
  \frac{1}{\lambda} \le T_{\mbox{\tiny TV}}\le T_\infty  \le  \frac{(1+\log 1/\pi_*)}{\lambda}.\end{equation}
  
  To conclude this work, we review how this applies to some of our examples and what the expected true behavior should be.

 \subsection{Asymmetric one-dimensional examples of Section \ref{sec-dim1}} In these examples 
  $\log (1+1/\pi_*)  \asymp \log \max\{N_-,N_+\}$. In the symmetric case when $N_-=N_+=N$ and $a_-=a_+=a$, treated in \cite{Nash}, $ T_{\mbox{\tiny TV}}\asymp T_\infty \asymp 1/\lambda$ (the implied constants may depend on  $a$ but they do not depend on $N$; see \cite{Nash}).  One expects that the same is true in the asymmetric cases discussed here but the technique of \cite{Nash} does not apply directly: getting rid of the factor of $\log (1+1/\pi_*)$ requires some different ideas and we do not know of a directly applicable reference.

  \subsection{Exponential fall-off} In the case of the examples treated in Subsections \ref{sec-exp1}-\ref{sec-exp2}
  (assume $a\asymp b$ in Subsection (\ref{sec-exp1})), we have $\log (1+1/\pi_*)\asymp N$ and $1/\lambda \asymp 1$. Moreover, it is relatively easy to see that the lower bound $1\asymp \frac{1}{\lambda} \le T_{\mbox{\tiny TV}}$ is off by a factor of $N$. Heuristically, starting from a position that is away from the exponential peak of the distribution, it takes at least order $N$ steps to reach a set $A$ with $\pi(A)\ge 1/2$, providing the desired lower bound.  So the upper-bound $T_\infty\le C N$ is sharp and $T_{\mbox{\tiny TV}}\asymp T_\infty\asymp N$ in these cases.
  
   \subsection{Examples with spectral gap no worse than $1/N^2$ }   Examples treated in this section  can have 
   $\log (1+ 1/\pi_*)$ of order varying from $\log N$ to $N$ and it seems difficult to improve upon (\ref{mix}) without making more restrictive hypotheses.  
  
  \subsection{The valley effect} Regarding the examples treated in Section \ref{sec-valley}, the basic ideas used to treat the one-dimensional versions of these examples in \cite{Nash} do apply and it is thus possible to improve upon the lower bound in (\ref{mix}) and show that, with constants depending on the parameter $\alpha>0$ but not on $N$,
  $$\frac{1}{\lambda} \asymp T_{\mbox{\tiny TV}}\asymp T_\infty .$$
  Note that in these examples $\log(1+1/\pi_*)\asymp\log N$.
 The application of the Nash inequality technique used in \cite{Nash} to the present $2$-dimensional examples requires some work and we only indicate  briefly the main idea.  The reason such examples satisfy the proper Nash inequality, uniformly in $N$, is because each side of the valley does satisfy such and inequality and it is possible to glue together these functional inequalities to obtain one for the entire box.  When $\alpha\ge 1$, these examples present a variety of interesting behaviors including the following:  (a) from a starting point on the line $L$ (the valley), or appropriately close to it, the chain mixes in time of order $N^2$, significantly shorter than the relaxation time $1/\lambda$ (this is not  immediate to prove); (b) when starting away from the line $L$, the chain displays a quasi-stationary behavior in the sense that it appears to equilibrate after a time of order $N^2$ on about half the box (one side of the line $L$) before reaching its true equilibrium in a time of order $1/\lambda$. This requires showing that there is a large gap between the spectral gap $\lambda$ and the next eigenvalue which is of order $1/N^2\ggg \lambda$. Such effects do not occur when $\alpha\in (0,1)$. 
 See \cite[Section 6]{DHESC} for a related discussion and examples.
    
  \bibliographystyle{plain}
\bibliography{Sophie}

 \end{document}